\DeclareMathOperator{\red}{red}
\theoremstyle{plain}
\newtheorem{theorem}{Theorem}[section]
\newtheorem*{theorem*}{Theorem}
\newtheorem{proposition}[theorem]{Proposition}
\newtheorem{lemma}[theorem]{Lemma}
\newtheorem{question}[theorem]{Question}
\theoremstyle{definition}
\newtheorem{definition}[theorem]{Definition}
\newtheorem{remark}[theorem]{Remark}
\newtheorem{example}[theorem]{Example}
\newcommand{\enm}[1]{\ensuremath{#1}}          %
\newcommand{\cal}[1]{\mathcal{#1}}
\newcommand{\CC}{\enm{\mathbb{C}}}
\newcommand{\FF}{\enm{\mathbb{F}}}
\newcommand{\PP}{\enm{\mathbb{P}}}
\newcommand{\Aa}{\enm{\cal{A}}}
\newcommand{\Bb}{\enm{\cal{B}}}
\newcommand{\Ii}{\enm{\cal{I}}}
\newcommand{\Oo}{\enm{\cal{O}}}
\newcommand{\Ss}{\enm{\cal{S}}}
\renewcommand{\phi}{\varphi}
\renewcommand{\theta}{\vartheta}
\renewcommand{\epsilon}{\varepsilon}
\begin{document}

\title[Segre varieties]
{Linear dependent subsets of Segre varieties}
\author{E. Ballico}
\address{Dept. of Mathematics\\
 University of Trento\\
38123 Povo (TN), Italy}
\email{ballico@science.unitn.it}
\thanks{The author was partially supported by MIUR and GNSAGA of INdAM (Italy).}
\subjclass[2010]{14N05; 12E99; 12F99}
\keywords{Segre varieties}

\begin{abstract}
We study the linear algebra of finite subsets $S$ of a Segre variety $X$. In particular we classify the pairs $(S,X)$
with $S$ linear dependent and $\#(S)\le 5$. We consider an additional condition for linear dependent sets (no two of their
points are contained in a line of
$X$) and get far better lower bounds for $\#(S)$ in term of the dimension and number of the factors of $X$. In this discussion
and in the classification of the case $\#(S)= 5$, $X\cong \PP^1\times \PP^1\times \PP^1$ we use the rational normal curves
contained in $X$.
\end{abstract}

\maketitle

\section{Introduction}

Let $K$ be a field. Fix positive integers $k$ and $n_i$, $1\le i\le k$. Set $Y:= \prod _{i=1}^{k} \PP^{n_i}$ (the
multiprojective space with $k$ non-trivial factors of dimension $n_1,\dots ,n_k$). Set $r:= -1+ \prod _{i=1}^{k} (n_i+1)$. 
Let $\nu : Y\to \PP^r$ denote the Segre embedding of the multiprojective space $Y$. Thus $X:= \nu (Y)$ is a Segre variety of
dimension $n_1+\cdots +n_k$. It was introduced by Corrado Segre in 1891 (\cite{seg}). See \cite[Ch. 25]{ht} for its geometry over a
finite field. In the last 30 years this variety had a prominent role in the applied sciences, because it is strongly related
to tensors and it was realized that tensors may be used in Engineering and other sciences (\cite{l}).

Let $S\subset Y$ be a finite subset. Set
$e(S):= h^1(\Ii _S(1,\dots ,1)) = \# (A) -1 -\dim \langle \nu (A)\rangle$, where $\langle \ \ \rangle$ denote the linear
span. The minimal multiprojective subspace
$Y'$ of
$Y$ containing
$S$ is the multiprojective space $\prod _{i=1}^{k} \langle \pi _i(S)\rangle \subseteq Y$, where  $\langle \pi _i(S)\rangle$
denote the linear span of the finite set $\pi _i(S)$ in the projective space $\PP^{n_i}$. We say that $Y'$ is the multiprojective subspace generated by $S$ and that $S$ is \emph{nondegenerate} if $Y'=Y$. We say that
$S$ is
\emph{linearly independent} if
$\nu (S)\subset \PP^r$ is linearly independent. By the definitions of Segre embedding and of the integer $e(S)$ we have $\dim
\langle
\nu (S)\rangle =\# (S)-1-e(S)$. In particular $S$ is linearly dependent if and only if $e(S)>0$. We say that
$S$ is a
\emph{circuit} if
$S$ is linearly dependent, but every proper subset of $S$ is linearly independent.

 Everything said up to now use only the
linear structure of the ambient $\PP^r$. Now we describe the new feature coming from the structure  of $Y$ as a
multiprojective space, in particular the structure of linear subspaces contained in the Segre variety $\nu (Y)$. We say that a
finite set $S\subset Y$ is \emph{minimal} if there is no line $L\subset \nu (Y)$ such that $\# (\nu (S)\cap L) \ge 2$. Of
course, if $\# (\nu (S)\cap L) \ge 3$, then $S$ is not linearly independent. However, a non-minimal finite set $S$ may be
linearly independent (take as $S$ two points such that the line $\langle \nu (S)\rangle$ is contained in $\nu (Y)$). When $S$
is linearly independent there is no $A\subset Y$ such that $\# (A)<\# (S)$ and $\langle \nu (A)\rangle \supseteq
\langle
\nu (S)\rangle$, but if $S$ is not minimal, say there is $L\subset Y$ such that $\nu (L)$ is a line and $L\cap S\supseteq
\{a,b\}$ with $a\ne  b$,  there is $o\in L$ such that $q\in \langle \nu (o) \cup (S\setminus \{a,b\})$. Note that $\# (\{o\}\cup (S\setminus \{a,b\}) <\# (S)$. We say that $S$ is \emph{$i$-minimal} if there is no curve $J\subset Y$ such that $\nu (J)$ is a line, $\# (J\cap S)\ge 2$ and $J$ is mapped isomorphically into the $i$-th factors of $Y$, while
it is contracted to a point by the projections onto the other factors of $Y$. The finite set $S$ is minimal if and only if it
is $i$-minimal for all $i$. The minimality condition is in general quite weaker/different from the assumptions needed to apply
the famous Kruskal's criterion to two subsets $A, B\subset S$ with $A\cup B=S$ and $\# (A) = \# (B)$
(\cite{co,cov,cov2,ddl1,ddl2,ddl3,kr}).

We classify  circuits with cardinality $4$ (Proposition \ref{e2}) and give the following classification of circuits formed by $5$ points.
\begin{theorem}\label{e3}
Let $\Sigma$ denote the set of all nondegenerate circuits $S\subset Y$ such that $\# (S) =5$. Then one of
the following cases occurs:
\begin{enumerate}
\item $k=1$, $n_1=3$;
\item $k=2$, $n_1=n_2=1$;
\item $k=2$ and $n_1+n_2 =3$; all $S\in \Sigma$ are described in Example \ref{p2p1};
\item $k=3$, $n_1=n_2=n_3=1$; all $S\in \Sigma$ are described in Lemma \ref{p1p1p1}; in this case $\Sigma$ is an irreducible
variety of dimension $11$.
\end{enumerate}
\end{theorem}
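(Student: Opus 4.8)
The plan is to first reduce to the numerical data attached to $S$ and then use a flattening inequality to pin down $k$ and the $n_i$. Since $\#(S)=5$ and $S$ is a circuit, every $4$-point subset is linearly independent and hence spans a $\PP^3$; therefore $\dim\langle\nu(S)\rangle=3$, $e(S)=1$, and the linear relations among $\nu(S)$ form a $1$-dimensional space spanned by a relation $\sum_{p\in S}c_p\nu(p)=0$ in which every $c_p\neq 0$ (otherwise a proper subset of $S$ would be dependent). Nondegeneracy gives $\langle\pi_i(S)\rangle=\PP^{n_i}$ for all $i$, so in particular $n_i\le 4$.

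The main tool I would introduce is a flattening estimate. For a bipartition of the factors into $I\sqcup I^c$, let $\nu_I$ be the Segre embedding of $\prod_{i\in I}\PP^{n_i}$ and set $\rho_I:=1+\dim\langle\nu_I(\pi_I(S))\rangle$. Writing $\nu(p)=U_p\otimes W_p$ under the associated two-factor flattening, the relation above reads $\sum_p c_pU_pW_p^{\mathsf T}=0$, i.e. $\widetilde U V^{\mathsf T}=0$ with $\widetilde U=[\,c_pU_p\,]_p$ and $V=[\,W_p\,]_p$. Since every $c_p\neq 0$, the row spaces of $\widetilde U$ and of $V$ are orthogonal subspaces of $K^{5}$ of dimensions $\rho_I$ and $\rho_{I^c}$, whence $\rho_I+\rho_{I^c}\le 5$ for \emph{every} bipartition.

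Next I would bound the $\rho_I$ from below. Projecting onto the factor of largest dimension in $I$ gives $\rho_I\ge 1+\max_{i\in I}n_i$; taking $I=\{i\}$ with $n_i$ maximal, the estimate yields $n_{(1)}+n_{(2)}\le 3$, where $n_{(1)}\ge n_{(2)}$ are the two largest dimensions. So once $k\ge 2$, at most one factor has dimension $2$ and all the others have dimension $1$. The crucial refinement is that whenever $I$ consists of at least two nontrivial factors one has $\rho_I\ge 3$: if $\rho_I\le 2$ then $\nu_I(\pi_I(S))$ lies on a line $\ell$, and since the Segre variety $\nu_I\big(\prod_{i\in I}\PP^{n_i}\big)$ is cut out by quadrics, either $\ell$ is contained in it -- forcing all but one factor of $I$ to be constant on $S$, against nondegeneracy -- or $\ell$ meets it in at most two points, so $\pi_I(S)$ takes at most two distinct values. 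In the latter case, grouping the points of $S$ by their (distinct, hence linearly independent) $\nu_I$-images and splitting the relation $\sum_p c_pU_p\otimes W_p=0$ produces a nontrivial linear dependence supported on a proper subset of $S$, which is impossible for a circuit. This \emph{proper-subset} splitting is the step I expect to require the most care.

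With these inequalities the classification becomes mechanical. For $k=1$, nondegeneracy together with $\dim\langle\nu(S)\rangle=3$ forces $n_1=3$, giving (1). For $k\ge 4$, any bipartition into two parts each containing at least two factors gives $\rho_I+\rho_{I^c}\ge 6>5$, a contradiction; and $k=3$ with a factor of dimension $2$ is excluded by splitting that $\PP^2$ off from the remaining $\PP^1\times\PP^1$, where the refinement gives $3+3>5$. Hence $k=3$ forces $n_1=n_2=n_3=1$, which is (4), while $k=2$ leaves only $n_1+n_2\in\{2,3\}$, i.e. (2) and (3). Existence and the explicit descriptions in cases (2)--(4) are then supplied by five general points of the quadric (for (2)), by Example \ref{p2p1} (for (3)), and by Lemma \ref{p1p1p1} (for (4)); the latter identifies $\Sigma$ with the pairs consisting of a rational normal cubic of tridegree $(1,1,1)$ in $X$ together with five of its points, an irreducible family of dimension $6+5=11$.
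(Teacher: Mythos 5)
Your proof is correct, but it takes a genuinely different route from the paper's. The paper proceeds by an exhaustive case analysis over the candidate shapes $(k,n_1,\dots ,n_k)$: for each one it repeatedly invokes the cohomological Lemma \ref{ee0}, choosing divisors $H\in |\Oo _Y(\epsilon _i)|$ through subsets of $S$ and analyzing the residual sets $S\setminus S\cap H$ until either a proper multiprojective subspace containing $S$ or three collinear points of $\nu (S)$ is produced; the cases $k=2$ with $n_1+n_2\ge 4$, $k=3$ with some $n_i=2$, and $k=4$ are each excluded by a separate such argument. You instead extract the unique circuit relation $\sum _{p}c_p\nu (p)=0$ (all $c_p\ne 0$) and turn it into the single uniform flattening inequality $\rho _I+\rho _{I^c}\le 5$ for every bipartition of the factors, combined with two lower bounds: $\rho _I\ge 1+\max _{i\in I}n_i$ from nondegeneracy, and $\rho _I\ge 3$ when $I$ contains two nontrivial factors, proved via the structure of lines on Segre varieties, the fact that they are cut out by quadrics, and the splitting of the relation over a proper subset of $S$ (the step you flagged; it is sound, since two distinct $\nu _I$-images are linearly independent, so the relation splits and contradicts the circuit hypothesis). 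This kills all $k\ge 4$ in one stroke and makes the remaining enumeration mechanical, which is shorter and more transparent than the paper's route; conversely, the paper's divisor technique is the one that carries over to the non-circuit statements elsewhere (e.g.\ Propositions \ref{n4} and \ref{e3.01}). One small repair is needed to match the paper's generality over an arbitrary field $K$: justify $\rho _I\ge 1+n_i$ not by ``projecting'' (a contraction $\mathrm{id}\otimes \lambda$ nonvanishing at all five points need not exist over a small finite field) but by the elementary fact that if the $\pi _i$-components of some $n_i+1$ points of $S$ are linearly independent, then so are the corresponding tensors $U_p$ --- apply $\mathrm{id}\otimes \lambda$ for each $\lambda$ separately and use that each complementary factor is nonzero. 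Both your proof and the paper's delegate the explicit descriptions and the dimension count $6+5=11$ in cases (3) and (4) to Example \ref{p2p1} and Lemma \ref{p1p1p1}, so nothing is missing there.
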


All $S\in \Sigma$ in the first two  cases listed in Theorem \ref{e3} are obvious and we describe them in Remark
\ref{e3.0}. In the classification of case $k=3$ we use the rational normal curves contained in a Segre variety. 

We also
classify the nondegenerate sets
$S$ with
$\# (S)=5$ and
$e(S)\ge 2$ (Proposition
\ref{e3.01}).

The study of
linearly dependent subsets of Segre varieties with low cardinality was started in
\cite{sac}.

The author has no conflict of interest.

\section{Preliminaries}
Take $Y =\PP^{n_1}\times \cdots \times \PP^{n_k}$, $k\ge 1$, $n_i>0$, $1\le i\le k$. Set $r:= -1 + \prod _{i=1}^{k} (n_i+1)$. Let $\nu: Y \to \PP^r$
denote the Segre embedding of $Y$. We will use the same name, $\nu$, for the Segre embedding of any
multiprojective subspace $Y'\subseteq Y$. Set
$X:=
\nu (Y)\subset \PP^r$. For any $q\in  \PP^r$ the $X$-rank of $q$ is the minimal cardinality of a finite subset $S\subset X$
such that $q\in \langle S\rangle$, where $\langle \ \ \rangle$ denote the linear span. For any $q\in \PP^r$ let $\Ss (Y,q)$
denote the set
of all $A\subset Y$ such that $\# (A) =r_X(q)$ and $q\in \langle \nu (A)\rangle$. In the introduction we observed that $S\notin \Ss (Y,q)$ for any $q\in \PP^r$ if $S$ is not minimal.

For any $i\in \{1,\dots ,k\}$ set
$Y_i:= \prod_{h\ne i} \PP^{n_h}$ with the convention that $Y_1$ is a single point if $k=1$. 

Let $\eta _i: Y\to Y_i$ denote
the projection (it is the map forgetting the $i$-th coordinate of the points of $Y$). We have $h^0(\Oo _Y(1,\dots ,1)) =r+1$. For any $i\in \{1,\dots ,k\}$ let $\Oo _Y(\epsilon _i)$ (resp. $\Oo
_Y(\hat{\epsilon}_i)$) be the line bundle $\Oo _Y(a_1,\dots ,a_k)$ on $Y$ with multidegree $(a_1,\dots ,a_k)$ with $a_i=1$ and
$a_j =0$ for all $j\ne i$ (resp. $a_i=0$ and $a_j=1$ for all $j\ne i$. We have $h^0(\Oo _Y(\epsilon _i)) =n_i+1$ and $h^0(\Oo
_Y(\hat{\epsilon}_i)) =(r+1)/(n_i+1)$.

\begin{definition}
Take $S\subset Y$ such that $e(S)>0$. We say that $S$ is \emph{strongly essential} if $e(S')=0$ for all $S'\subset S$ such that
$\# (S') =\# (S)-e(S)$.
\end{definition}

Take an essential set $S\subset Y$ and any $S'\subset S$. We have $e(S') =\max \{0,e(S)-\# (S)+\# (S')\}$.

A set $S\subset Y$ with $e(S)=1$ is strongly essential if and only if it is a circuit.

We recall the following lemma (\cite[Lemma 2.4]{bbcg1}), whose proof works over any algebraically closed field, although it was only claimed over $\CC$, or at least over an algebraically closed base field
with characteristic $0$. 

\begin{lemma}\label{ee0}
Take $q\in \PP^r$ and and finite sets $A, B\subset Y$ irredundantly spanning $q$. Fix an effective divisor $D\subset Y$.
Assume $A\ne B$ and $h^1(\Ii _{A\cup B\setminus D\cap (A\cup B)}(1,\dots ,1)(-D))=0$. Then $A\setminus A\cap D =B\setminus
B\cap D$.
\end{lemma}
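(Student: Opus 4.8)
The plan is to reinterpret both hypotheses as statements about linear relations among the points $\nu(w)$, $w\in F:=A\cup B$, and then to use a section defining $D$ to split any such relation into a part supported on $D$ and a residual part that the vanishing hypothesis annihilates.

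First I would record the conclusion in set-theoretic form: writing $F':=F\setminus (F\cap D)=(A\setminus A\cap D)\cup (B\setminus B\cap D)$ for the (reduced) residual set, the equality $A\setminus A\cap D=B\setminus B\cap D$ is equivalent to the inclusion $(A\setminus B)\cup (B\setminus A)\subseteq D$ of the symmetric difference. Next, choosing lifts to $H^0(\Oo_Y(1,\dots,1))^{\vee}$, irredundancy of the two spanning expressions $q=\sum_{a\in A}\lambda_a\nu(a)=\sum_{b\in B}\mu_b\nu(b)$ means that all $\lambda_a$ and all $\mu_b$ are nonzero. Subtracting them produces a functional $\rho=(\rho_w)_{w\in F}$ with $\sum_{w\in F}\rho_w\,\sigma(w)=0$ for every $\sigma\in H^0(\Oo_Y(1,\dots,1))$, that is, a linear relation among the $\nu(w)$; crucially, $\rho_w\neq 0$ for every $w$ in the symmetric difference (there $\rho_w=\lambda_w$ or $-\mu_w$), whereas on $A\cap B$ the coefficient $\lambda_w-\mu_w$ may vanish, which will not matter.

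The bridge to the twisted linear system is the heart of the argument. Let $\ell$ be a section cutting out $D$, so that multiplication by $\ell$ carries $H^0(\Oo_Y(1,\dots,1)(-D))$ into $H^0(\Oo_Y(1,\dots,1))$. Applying the relation $\rho$ to the sections $\ell\cdot s$ with $s\in H^0(\Oo_Y(1,\dots,1)(-D))$, and using $\ell(w)=0$ for $w\in F\cap D$, I obtain $\sum_{w\in F'}\rho_w\,\ell(w)\,s(w)=0$ for all such $s$. Thus $(\rho_w\ell(w))_{w\in F'}$ is a linear relation among the images of $F'$ under the system $|\Oo_Y(1,\dots,1)(-D)|$. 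Now the hypothesis enters through the evaluation sequence
$$0\to \Ii_{F'}(1,\dots,1)(-D)\to \Oo_Y(1,\dots,1)(-D)\to \Oo_{F'}(1,\dots,1)(-D)\to 0.$$
Indeed the vanishing $h^1(\Ii_{F'}(1,\dots,1)(-D))=0$ makes the evaluation map $H^0(\Oo_Y(1,\dots,1)(-D))\to H^0(\Oo_{F'}(1,\dots,1)(-D))$ surjective, so $F'$ admits no nonzero linear relation for this system. Hence $\rho_w\ell(w)=0$ for all $w\in F'$, and since $\ell(w)\neq 0$ on $F'$ we get $\rho_w=0$ there. Therefore $\rho$ is supported on $F\cap D\subseteq D$; comparing with the fact that $\rho$ is nonzero at every point of the symmetric difference forces $(A\setminus B)\cup (B\setminus A)\subseteq D$, which is the assertion.

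The step I expect to be the main obstacle is precisely this bridge: one must check that multiplying by $\ell$ and restricting $\rho$ genuinely yields a relation for the residual system $\Oo_Y(1,\dots,1)(-D)$ on $F'$, and that the twisted vanishing $h^1(\Ii_{F'}(1,\dots,1)(-D))=0$ says exactly that $F'$ imposes independent conditions on (equivalently, carries no linear relation for) that system. It is worth noting that this last implication follows from surjectivity of the evaluation map read off the displayed sequence and needs no vanishing of $h^1$ of the line bundle $\Oo_Y(1,\dots,1)(-D)$ itself.
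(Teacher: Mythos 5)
Your proposal is correct, and it takes a genuinely different route from the paper, because the paper does not actually reprove the lemma: its ``proof'' consists of (i) the observation that the argument for \cite[Lemma 2.5]{bbcg1} (whose statement implies \cite[Lemma 2.4]{bbcg1}) never uses the blanket assumption there that the base field is $\CC$ (or algebraically closed of characteristic $0$), and (ii) a descent step from $\overline{K}$ to $K$: spans of $K$-points have the same dimension over $K$ and over $\overline{K}$, and cohomology dimensions of coherent sheaves are preserved under the flat extension $K\subseteq \overline{K}$ (\cite[Proposition III.9.3]{h}). Your argument --- subtract the two irredundant representations of a common lift of $q$ to obtain a linear relation among the evaluation functionals at the points of $F:=A\cup B$; multiply by a section $\ell$ cutting out $D$ to transport that relation to the residual system $H^0(\Oo _Y(1,\dots ,1)(-D))$ supported on $F':=F\setminus (F\cap D)$; then use $h^1(\Ii _{F'}(1,\dots ,1)(-D))=0$ and the restriction exact sequence to make the evaluation map surjective, killing the relation on $F'$ and forcing the symmetric difference of $A$ and $B$ into $D$ --- is in substance the residual-sequence proof of the cited lemma, reconstructed in a self-contained way. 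What your route buys: every step (linear algebra of evaluation functionals, the long exact cohomology sequence) works verbatim over an arbitrary field $K$, so the paper's detour through $\overline{K}$ and flat base change becomes unnecessary; it also makes the paper logically independent of \cite{bbcg1} at this point. What the paper's route buys is brevity, at the price of having to argue that the imported lemma's hypotheses can be relaxed. Two small points you should make explicit: rescale so that both representations express the \emph{same} lift $\tilde q$ before subtracting, and note that the trivializations of the fibers used to speak of ``relations'' are harmless, since whether $\ell (w)$ or a coefficient $\rho _w$ vanishes does not depend on these choices.
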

\begin{proof}
In \cite{bbcg1} there is the default assumption that the base field is $\CC$ (or al least an algebraically closed field with
characteristic $0$). The proof of \cite[Lemma 2.5]{bbcg1} (whose statement imply \cite[Lemma 2.4]{bbcg1}) never uses any
assumption on the characteristic of the base field. Now we explain why the statement of Lemma \ref{ee0} over an algebraic
closure $\overline{K}$ of $K$ implies the statement over $K$. By assumption all points of $A$ and $B$ are defined over $K$.
The dimension of a linear span of a subset of $\nu(A\cup B)$ is the same over $K$ or over $\overline{K}$. Since the
statement of the lemma also uses cohomology groups of coherent sheaves we also need to use that the dimension of the cohomology
groups of coherent sheaves on projective varieties defined over $K$ is preserved when we extend the base field $K\subseteq
\overline{K}$, because $\overline{K}$ is flat over $K$ (\cite[Proposition III.9.3]{h}).
\end{proof}

\begin{remark}Fix an integer $e>0$ and an integral and non-degenerate variety $W\subset \PP^r$ defined over $\overline{K}$. Since
$W(\overline{K})$ is Zariski dense, $r+1+e$ is the maximal cardinality over a finite set $S\subset W(\overline{K})$ such that
$\dim \langle S\rangle =\# (S)-1-e$. The same is true if $W$ is defined over $K$ and we require that $S\subset W(K)$ and
that $W(K)$ is Zariski dense in $W(\overline{K})$. The minimal such cardinality of any such set $S\subset W(\overline{K})$
is
$e+2$ if and only if $W$ contains a line; otherwise it is larger. If $K$ is finite to get the same we need $\# (K) \ge e+1$
and that the line $L\subset W$ is defined over $K$. The Segre variety has plenty of lines defined over the base field $K$.
\end{remark}

\section{Linear algebra inside the Segre varieties}\label{Sl}

Take $Y:= \PP^{n_1}\times \cdots \times \PP^{n_k}$, $n_i>0$ for all $i$. 

\begin{remark}Fix a finite nondegenerate set $S\subset Y$.
We assume $h^1(\Ii _S(1,\dots ,1)) >0$ (i.e., that $\nu (S)$ is linearly independent, i.e., (with our terminology) that $S$ is linearly dependent) and $h^1(\Ii _{S'}(1,\dots ,1)) =0$ for all $S'\subsetneq S$ (i.e., that each proper subset of $S'$ is linearly independent). Equivalently, let $S\subset Y$ be a nondegenerate circuit. In particular we have $h^1(\Ii _S(1,\dots ,1)) =1$, i.e., $\dim \langle \nu (S)\rangle = \# (S)-2$. Since $\Oo _Y(1,\dots ,1)$ is very ample, we have $\# (S)\ge 3$.
\end{remark}
\begin{remark}\label{eo0}
Take $Y = \PP^{n_1}\times \times \cdots \times \PP^{n_k}$ and set $m:= \max \{n_1,\dots ,n_k\}$. It easy to check that $m+1$
is the minimal cardinality of a subset of $Y$ generating $Y$, i.e., not contained in a proper multiprojective subspace of $Y$ (just take
$S$ such
that $\dim \langle \pi _i(S)\rangle =\min \{n_i,\# (S)-1\}$ for all $i$).
\end{remark}

\begin{example}\label{ee1}
Let $S\subset Y$ be a finite linearly independent subset, $S\ne \emptyset$, and set $s:= \# (S)$. Fix
$i\in
\{1,\dots ,k\}$. We construct another subset $S_i\subset Y$ such that $\# (S_i) =s+1$ and $\#
(S_i\cap S) =s-1$ in the following way and discuss when (assuming $s\le r$) $S_i$ is linearly independent. Fix $o\in S$ and $i\in \{1,\dots ,k\}$. Take a line $L\subseteq \PP^{n_i}$ containing
$o_i$. Fix two points $o'_i,o''_i\in L\setminus L\cap \pi _i(S)$. Let $o'$ (resp. $o''$) be the only point of $Y$ with $\pi
_i(o') = o'_i$ (resp. $\pi _i(o'') = o''_i$) and $\pi _j(o')=\pi _j(o'') = \pi _j(o)$ for all $j\ne i$. Set $S_i:= (S\setminus
\{o\})\cup \{o',o''\}$. We have $\# (S_i) =\# (S)+1$. By assumption we have $\dim \langle \nu (S)\rangle =s-1$. Let
$D\subseteq D'\subseteq Y$  be the multiprojective subspace of $Y$ with $\pi _j(o)$ as their projection for all $j\ne i$, $\pi
_i(D)=L$ and $\pi _i(D') =\PP^{n_i}$. The set $S_i$ is linearly independent for general $o'_i,o''_i\in L$ (resp. for general
$o'_i,o''_i\in L$ and for a general line $L\subseteq \PP^{n_i}$) if and only if $\dim \langle \nu (S\cup D)\rangle \ge s$
(resp.  $\dim \langle \nu (S\cup D)\rangle \ge s$). Now assume $s\ne r+1$ and set $E:= S\setminus \{o\}$. Take a general $(a_1,\dots ,a_k)\in Y$, and a general
line $J\subset \PP^{n_i}$ such that $a_i\in J$. Let $T\subset Y$ be the irreducible curve with $\pi _h(T)=a_h$ for all $h\ne i$, $\pi _i(T)=J$ and $\pi _{i|T}: T\to J$ an isomorphism.
Then $L:= \nu (T)$ is a line and $\dim \langle L\cup \nu (E)\rangle =s$. We describe in Lemma \ref{ee1.1} and Remark
\ref{ee1.12} some cases in which we may take $\nu (o)\in L$. Of course, for any linearly independent set $A\subset Y$ with
$\# (A) \le r$, there is $o\in Y\setminus A$ such that $A\cup \{o\}$ is linearly independent, because $\nu (Y\setminus A)$
spans
$\PP^r$.
\end{example}

The set $S_i$ constructed in Example \ref{ee1} will be called an \emph{elementary increasing} of $S$ in the $i$-th factor or an \emph{$i$-elementary increasing} of $S$. We say that the set $S$ is obtained from $S_i$ by an \emph{elementary decreasing} of the non-minimal set $S_i$ in the $i$-th direction or by an \emph{$i$-elementary decreasing}.

Note that $S_i\subset S$ is obtained from some $i$ making an elementary increasing along the $i$-th component if and only if
$\eta _{i|S_i}$ is not injective.

\begin{remark}\label{ee1.0}
Take  $S, S_i, o, o', o'' $ as in Example \ref{ee1}. Since $\nu (o)\in \langle \nu (\{o',o''\})\rangle$, we have $\langle \nu (S)\rangle\subseteq \langle \nu (S_i)\rangle$, with strict inclusion if and only if $S_i$ is linearly independent.
\end{remark}

\begin{lemma}\label{ee1.1}
Let $S\subset Y$ be a linearly independent finite subset, $S\ne \emptyset$, such that there is $i\in \{1,\dots ,k\}$ such that $\langle \pi _i(S)\rangle \subsetneq \PP^{n_i}$. Then $S$ has a linearly independent elementary increasing in the $i$-th direction.
\end{lemma}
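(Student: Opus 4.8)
The plan is to reduce the statement to the linear-independence criterion for an elementary increasing already recorded in Example~\ref{ee1}, and then to feed that criterion a line produced from the hypothesis. Fix any $o\in S$, set $s:=\#(S)$, and for a line $L\subseteq\PP^{n_i}$ with $o_i\in L$ let $D\subseteq Y$ be the curve with $\pi_i(D)=L$ and $\pi_j(D)=\pi_j(o)$ for $j\neq i$, so that $\nu(D)$ is a line through $\nu(o)$. By Example~\ref{ee1} the resulting $i$-elementary increasing $S_i$ is linearly independent for general $o'_i,o''_i\in L$ exactly when $\dim\langle\nu(S\cup D)\rangle\ge s$. Since $\langle\nu(S\cup D)\rangle=\langle\nu(S)\rangle+\nu(D)$ and $\nu(D)$ already contains $\nu(o)\in\langle\nu(S)\rangle$, this dimension equals $s$ precisely when $\nu(D)\not\subseteq\langle\nu(S)\rangle$. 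So the whole task becomes: find one line $L\ni o_i$ whose Segre image escapes $\langle\nu(S)\rangle$.

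The next step is to replace $L$ by the full fibre. Let $D'\supseteq D$ be the multiprojective subspace with $\pi_i(D')=\PP^{n_i}$ and $\pi_j(D')=\pi_j(o)$ for $j\neq i$, as in Example~\ref{ee1}. Then $\Pi:=\nu(D')$ is a linear $n_i$-plane of $\PP^r$ containing $\nu(o)$, and fixing all coordinates but the $i$-th shows that $\nu$ restricts to a linear isomorphism of $\PP^{n_i}=\PP(V_i)$ onto $\Pi$; in particular it carries the lines of $\PP^{n_i}$ through $o_i$ onto the lines of $\Pi$ through $\nu(o)$. As $\nu(o)\in\langle\nu(S)\rangle$, a line $L$ with $\nu(D)\not\subseteq\langle\nu(S)\rangle$ exists if and only if $\Pi\not\subseteq\langle\nu(S)\rangle$: given $z\in\Pi\setminus\langle\nu(S)\rangle$, the line $L\subseteq\PP^{n_i}$ mapping onto $\langle\nu(o),z\rangle$ works. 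Thus it remains only to prove $\nu(D')\not\subseteq\langle\nu(S)\rangle$.

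This last claim is where the hypothesis $\langle\pi_i(S)\rangle\subsetneq\PP^{n_i}$ is essential, and it is the step I expect to be the crux. From the hypothesis there is a hyperplane of $\PP^{n_i}$ containing $\pi_i(S)$, i.e.\ a nonzero linear form $\ell$ on $V_i$ with $\ell(\pi_i(p))=0$ for every $p\in S$. Writing $\PP^r=\PP(V_1\otimes\cdots\otimes V_k)$ and choosing forms $\lambda_j$ on $V_j$ with $\lambda_j(\pi_j(o))\neq0$ for $j\neq i$, the decomposable functional $\Lambda:=\lambda_1\otimes\cdots\otimes\ell\otimes\cdots\otimes\lambda_k$ (with $\ell$ in the $i$-th slot) satisfies $\Lambda(\nu(p))=\ell(\pi_i(p))\prod_{j\neq i}\lambda_j(\pi_j(p))=0$ for all $p\in S$, so $\langle\nu(S)\rangle\subseteq\{\Lambda=0\}$. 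On the other hand $\Lambda$ restricts on $\Pi=\nu(D')$ to a nonzero multiple of $\ell$, hence does not vanish identically on $\Pi$, whence $\Pi\not\subseteq\{\Lambda=0\}\supseteq\langle\nu(S)\rangle$. This yields the required line and proves the lemma. The only soft point is ensuring that $L$ carries two distinct points off the finite set $\pi_i(S)$; this is automatic when $K$ is infinite and requires only a mild bound on $\#(K)$ otherwise.
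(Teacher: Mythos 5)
Your proof is correct, and its skeleton agrees with the paper's: both arguments reduce the lemma, via the criterion recorded in Example \ref{ee1}, to producing one line $L\subseteq \PP^{n_i}$ through $\pi_i(o)$ whose fibre curve $D$ satisfies $\nu(D)\nsubseteq \langle \nu(S)\rangle$. Where you genuinely differ is in how the escape is certified. The paper encloses $\langle \nu(S)\rangle$ in $\langle \nu(Y')\rangle$, where $Y'\subsetneq Y$ is the multiprojective subspace whose $i$-th factor is $\langle \pi_i(S)\rangle$ and whose other factors are the full $\PP^{n_j}$; it then takes a line $L\ni \pi_i(o)$ with $L\nsubseteq \langle \pi_i(S)\rangle$ and concludes $\nu(L)\nsubseteq \langle \nu(Y')\rangle$, implicitly using that a point of $\nu(Y)$ outside $\nu(Y')$ lies outside $\langle \nu(Y')\rangle$ --- a standard but unproved-there property of sub-Segre varieties. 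You instead enclose $\langle \nu(S)\rangle$ in the explicit hyperplane $\{\Lambda =0\}$, with $\Lambda =\lambda_1\otimes \cdots \otimes \ell \otimes \cdots \otimes \lambda_k$ and $\ell$ vanishing on $\langle \pi_i(S)\rangle$, and check by direct computation that the fibre plane $\Pi =\nu(D')$ is not contained in it. Your route is more self-contained and elementary (pure multilinear algebra, valid over any field without appeal to the geometry of spans of sub-Segres), at the cost of slightly more writing; the paper's route is shorter granted that geometric fact, which it also uses elsewhere.

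One refinement removes your closing caveat about finite fields: choose $z\in \Pi$ with $\Lambda(z)\ne 0$ rather than merely $z\notin \langle \nu(S)\rangle$. This is possible over any $K$, since $\Pi \cap \{\Lambda =0\}$ is a hyperplane of $\Pi$ and a proper linear subspace never contains all $K$-points. With this choice the line $L$ meets the hyperplane $\{\ell =0\}\supseteq \langle \pi_i(S)\rangle$ of $\PP^{n_i}$ only at $o_i$, so $L\cap \pi_i(S)=\{o_i\}$; since a projective line has at least three $K$-points over every field, two distinct admissible points $o'_i,o''_i$ always exist and no bound on $\#(K)$ is needed. (The paper's choice of $L\nsubseteq \langle \pi_i(S)\rangle$ enjoys the same property, which is why it can fix \emph{any} two points of $L\setminus \{\pi_i(o)\}$.)
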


\begin{proof}
Set $s:= \# (S)$. Let $Y'\subset Y$ be the multiprojective space with $\PP^{n_j}$ as its factor for all $j\ne i$ and $\langle \pi _i(S)\rangle$ as its $i$-th factor. By assumption we have $Y'\subsetneq Y$. Fix $o\in S$. A general line $L\subset \PP^{n_i}$ containing $\pi _i(o)$ is not contained in $\langle \pi _i(S)\rangle$. Call $Y''\subseteq Y$
the multiprojective space with $\PP^{n_j}$ as its factor for all $j\ne i$ and $\langle \pi _i(S)\cup L\rangle$ as its $i$-th factor. We have $S\subset Y'\subsetneq
Y''\subseteq Y$. Hence
$\nu (L)\nsubseteq \langle \nu (Y')\rangle$. Fix any $o'_i,o''_i\in \pi _i(L)\setminus \{\pi _i(o)\}$ such that $o'_i\ne
o''_i$. Take $o', o''\in Y$ with $\pi _i(o')=o'_i$, $\pi _i(o'')=o''_i$ and $\pi _j(o')=\pi _j(o'')=o_j$ for all $j\ne i$.
\end{proof}

\begin{remark}\label{ee1.12}
For each $a
=(a_1,\dots ,k\}$ set $a[i]:= \eta _i^{-1}(\eta _i(a))$. Note that $a[i] \cong \PP^{n_i}$ and that $\nu (a[i])$ is an
$n_i$-dimensional linear space containing $\nu (a)$. Set $\{\{a\}\}:= \cup _{i=1}^{k} a[i]$. Note that $\dim
\langle\nu(\{\{a\}\})\rangle = n_1+\cdots +n_k$. For any finite set $A\subset Y$, $A\ne\emptyset$, set $A[i]:= \cup _{a\in A}
a[i]$ and $\{\{A\}\}:= \cup _{a\in A} \{\{a\}\}$. Now assume that $A$ is linearly independent and $\# (A)\le r$. Since any
two points of a projective space are contained in a line, $A$ has a linearly independent $i$-increasing (resp a linearly
independent increasing) if and only if $\langle \nu (A)\rangle \subsetneq \langle \nu (A[i])\rangle$ (resp.  $\langle \nu
(A)\rangle
\subsetneq \langle \nu (\{\{A[i])\}\}\rangle$). By Lemma \ref{ee1.1} to prove that $A$ has a linearly independent elementary
increasing we may assume that $\langle \pi _i(A)\rangle =\PP^{n_i}$ for all $i$. If $k=2$ and $\langle \pi _i(A)\rangle
=\PP^{n_i}$ for at least one $i$, then it is easy to see that $\langle \nu (A)\rangle =\PP^r$.
\end{remark}

\begin{remark}\label{p2}
Fix a linearly independent $S\subset Y$ and take  $i\in \{1,\dots ,k\}$ and $a\in Y_i$. Since $3$ collinear points
are not
linearly independent, we have $\# (S\cap \eta _i^{-1}(a)) \le 2$. Take a circuit $A\subset Y$. Either $\# (A\cap \eta
_i^{-1}(a)) \le 2$ or $A$ is formed by $3$ collinear points (and so $\PP^1$ is the minimal multiprojective space containing
$A$).
\end{remark}

Let $S\subset Y$ be a finite set such that $e(S)>0$. Note that $\# (S)\ge e(S)+2$. A point $o\in S$ is said to be
\emph{essential for $S$} if $e(S\setminus \{o\}) = e(S)-1$. If $o$ is not essential for $S$ we will often say that
$o$ is \emph{inessential for $S$}. Since $e(S)-e(S') \le \# (S)-\# (S')$ for all $S'\subset S$, $o$ is inessential for
$S$ if and only if $e(S\setminus \{o\}) = e(S)$.

Let $S\subset Y$ be a finite set such that $e(S)>0$. A \emph{kernel} of $S$ is a minimal subset $S'\subseteq S$ such that
$e(S')=e(S)$. 

\begin{lemma}\label{p3}
Any finite linearly dependent subset of $Y$ has a unique kernel.
\end{lemma}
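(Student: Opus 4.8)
The plan is to exhibit an explicit subset $S_0\subseteq S$ that is contained in \emph{every} subset $S'\subseteq S$ with $e(S')=e(S)$ and that itself satisfies $e(S_0)=e(S)$; such an $S_0$ is then automatically the unique minimal such subset, hence the unique kernel. The natural candidate is the set of essential points
\[
S_0:=\{o\in S:\ o\text{ is essential for }S\}=\{o\in S:\ \nu(o)\in\langle\nu(S\setminus\{o\})\rangle\},
\]
the two descriptions agreeing because $o$ is essential exactly when removing it does not drop the rank of $\nu(S)$. Equivalently, $S_0$ is the union of the supports of all linear relations among vectors representing $\nu(S)$, i.e.\ the union of all circuits contained in $S$; keeping this relation-space picture in mind is what makes the argument transparent, since $e(A)=\dim\{c:\ \sum c_o\nu(o)=0\}$ for every $A$ and relations supported on a subset extend by zero.

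First I would record two elementary facts. Since a relation supported on $A$ is a relation supported on any $B\supseteq A$, the function $e$ is monotone: $e(A)\le e(B)$ whenever $A\subseteq B$. Using this, the ``easy'' inclusion is immediate: if $o\in S$ is essential and $o\notin S'$, then $S'\subseteq S\setminus\{o\}$, so $e(S')\le e(S\setminus\{o\})=e(S)-1<e(S)$. Hence any $S'$ with $e(S')=e(S)$ must contain every essential point, that is, $S'\supseteq S_0$.

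The main work is the reverse assertion $e(S_0)=e(S)$, which I would prove by deleting the inessential points of $S$ one at a time. The key lemma, and the step I expect to be the only real obstacle, is that the essential/inessential dichotomy is stable under removal of an inessential point: if $o$ is inessential for $S$ and $p\in S\setminus\{o\}$, then $p$ is essential (resp.\ inessential) for $S\setminus\{o\}$ iff it is essential (resp.\ inessential) for $S$. The inessential case is clear from monotonicity; for the essential case one writes $\nu(p)=\alpha\nu(o)+(\text{combination of }\nu(S\setminus\{o,p\}))$ and observes that $\alpha\ne0$ would force $\nu(o)\in\langle\nu(S\setminus\{o\})\rangle$, contradicting that $o$ is inessential, so $\alpha=0$ and $\nu(p)\in\langle\nu(S\setminus\{o,p\})\rangle$. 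Granting this, deleting the inessential points successively each preserves $e$ (each deleted point being inessential at the moment of deletion) and never turns an essential point inessential, so after removing all of them we arrive exactly at $S_0$ with $e(S_0)=e(S)$. Combining the two directions, $S_0$ is the smallest, hence the unique minimal, subset with the same $e$-value as $S$, which is the assertion.
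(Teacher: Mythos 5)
Your proof is correct, but it follows a genuinely different route from the paper's. The paper argues by contradiction directly from two distinct kernels $S'$ and $S''$: choosing $o_b\in S''\setminus S'$, it plays the dimension count $\dim\langle\nu(S)\rangle=\dim\langle\nu(S')\rangle+\#(S\setminus S')$ (which forces every point of $S\setminus S'$, in particular $o_b$, to add an independent direction when adjoined to $S'$ together with any other points of $S$) against the minimality of $S''$ (which forces $\nu(o_b)\in\langle\nu(S''\setminus\{o_b\})\rangle$, so $o_b$ adds no new direction), a contradiction. You instead construct the kernel explicitly as the set $S_0$ of essential points, with two ingredients the paper does not isolate in this proof: monotonicity of $e$ (giving $S_0\subseteq S'$ for \emph{every} $S'$ with $e(S')=e(S)$) and the deletion-stability lemma (removing an inessential point changes no other point's essential/inessential status, proved by the $\alpha=0$ coefficient argument, which is where inessentiality of the deleted point is genuinely used). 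Both arguments are elementary linear algebra and both are complete. What your approach buys: it proves the paper's Lemma \ref{p4} (kernel $=$ set of essential points) simultaneously and for free, whereas the paper states and proves that separately (and its proof of Lemma \ref{p4} essentially just cites the uniqueness lemma); your version also yields the slightly stronger statement that $S_0$ lies inside every subset with the same $e$-value, not merely inside every minimal one. What the paper's approach buys: it is shorter, resting on the single identity (\ref{eqp1}) with no auxiliary stability lemma or induction on deletions.
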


\begin{proof}
Take a finite set $S\subset Y$ such that $e(S)>0$. Let $S'$ and $S''$ be kernels of $S$. Assume $S'\ne S''$. By the definition of
kernels we have $S''\nsubseteq S'$. We order the points of $S''$, say $ S'' =\{o_1,\dots ,o_b\}$ so that $o_b\notin S'$. By the
definition of kernel we have
\begin{equation}\label{eqp1}
\dim
\langle
\nu (S)\rangle =\dim \langle \nu (S')\rangle+\# (S\setminus S'). 
\end{equation}
We first add $\{o_1,\dots ,o_{b-1}\}$ to $S'$ and get a set
$S_1\supseteq S'$. By (\ref{eqp1}) we get $\dim \langle \nu (S_1)\rangle =\dim \langle \nu (S')\rangle +b-1 -\# (S'\cap
S'')$. Then we add $o_b$. Since $e(S'')>0$, we have $\nu (o_b)\in \langle \nu (\{o_1,\dots ,o_{b-1}\}\rangle$ and hence
$\nu (o_b)\in \langle \nu (S_1)\rangle $, contradicting (\ref{eqp1}).
\end{proof}

\begin{lemma}\label{p4}
Let $S\subset Y$ be a finite subset such that $e(S)>0$. The kernel of $S$ is the set of all its essential points, i.e., the
tail of $S$ is the set of all its inessential points.
\end{lemma}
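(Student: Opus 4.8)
The plan is to reduce everything to two elementary observations: the monotonicity of $e$ under inclusion, and the fact that the unique kernel is actually the minimum element—not merely a minimal element—among the subsets $S'\subseteq S$ with $e(S')=e(S)$. Once these are in place, both inclusions asserted by the lemma fall out immediately.

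First I would record monotonicity: if $A\subseteq B$ then $e(A)\le e(B)$. This follows at once from $e(S)=\#(S)-1-\dim\langle\nu(S)\rangle$, since appending one point to a finite set raises $\dim\langle\nu(\cdot)\rangle$ by $0$ or $1$ while raising the cardinality by $1$, so $e$ goes up by exactly $0$ or $1$. In particular, removing a single point lowers $e$ by at most $1$ and never raises it, which is precisely the dichotomy already recorded before the statement: $o$ is inessential when $e(S\setminus\{o\})=e(S)$ and essential when $e(S\setminus\{o\})=e(S)-1$.

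Second, I would upgrade Lemma \ref{p3} from uniqueness to a minimum property: the kernel $K$ of $S$ is contained in every $S'\subseteq S$ with $e(S')=e(S)$. Indeed, any such $S'$ contains a minimal subset $S''\subseteq S'$ with $e(S'')=e(S)$, obtained by deleting points one at a time so long as $e$ stays equal to $e(S)$; because every proper subset of $S''$ is a subset of $S'$, this $S''$ has no proper subset of the same $e$-value and is therefore a kernel of $S$, whence $S''=K$ by Lemma \ref{p3}, so $K=S''\subseteq S'$. With this, the two halves of the lemma are formal. If $o\notin K$, then $K\subseteq S\setminus\{o\}\subseteq S$, and monotonicity together with $e(K)=e(S)$ forces $e(S\setminus\{o\})=e(S)$, so $o$ is inessential. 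Conversely, if $o\in K$ but $o$ were inessential, then $e(S\setminus\{o\})=e(S)$, and the minimum property would give $K\subseteq S\setminus\{o\}$, contradicting $o\in K$; hence $o$ is essential. This shows that the kernel is exactly the set of essential points and the tail $S\setminus K$ exactly the set of inessential points.

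The only genuinely non-formal step is the second one, promoting ``unique minimal'' to ``minimum''; everything else is bookkeeping with the definition of $e$. I expect the main obstacle to be making sure the deletion argument in that step really terminates at a kernel, i.e.\ that the extracted minimal subset has no proper subset of the same $e$-value at all (not just within $S'$)—this is where the uniqueness supplied by Lemma \ref{p3} is doing the essential work.
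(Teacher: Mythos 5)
Your proposal is correct and follows essentially the same route as the paper: both arguments extract a minimal subset of $S\setminus\{o\}$ with the same $e$-value and invoke the uniqueness of the kernel (Lemma \ref{p3}) to conclude that every inessential point lies in the tail. Your explicit ``minimum property'' of the kernel and the monotonicity argument for the converse direction simply make precise what the paper asserts in one line (``No essential point of $S$ may belong to the tail''), so the two proofs differ only in the level of bookkeeping, not in substance.
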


\begin{proof}
Fix an inessential point $o\in S$ (if any). Let $S'\subset S\setminus \{o\}$ be a minimal subset of $S\setminus \{o\}$ with
$e(S')=e(S)$. By Lemma \ref{p4} $S'$ is the unique kernel of $S$. Hence the tail of $S$ contains $o$. Thus the tails of $S$
contains all inessential points of $S$. No essential point of $S$ may belong to the tail.
\end{proof}

Let $S\subset Y$ be a finite subsets with $e(S)>0$. The \emph{tail} of $S$ is $S\setminus S'$, where $S'$ is the kernel of
$S$. The tail of $S$ is the set of all inessential points of
$S$, while the \emph{kernel} of $S$ is the set of all its essential points.

\begin{remark}\label{p3.00}
Let $A\subset \PP^m$ be a linearly dependent finite subset, i.e., a finite subset such that $e(A):= \# (A)-1-\dim \langle A\rangle >0$. We first show to different methods
to get a subset $B\subseteq A$ with $e(B) =e(A)$, $e(B')$. Easy examples (even when $m=1$) shows that each of these methods
does not give a unique $B$. The first method is increasing the number of points in a subset of $A$. We start with $o, o'\in A$
such that $o\ne o'$. Obviously $e(\{o,o'\})=0$
\end{remark}
 
Consider the following construction: linear projections of a multiprojective space from proper linear subspaces of
one of its factors. Fix integers $0\le v < n$ and a $v$-dimensional linear subspace $V\subset \PP^n$. Let $\ell _V:
\PP^n\setminus V\to \PP^{n-v-1}$ denote the linear projection from $V$. Take $Y = \PP^{n_1}\times \cdots \times \PP^{n_k}$.
Fix an integer
$i\in \{1,\dots ,k\}$, an integer $v$ such that $0\le v <n_i$ and a $v$-dimensional linear subspace $V\subset \PP^{n_i}$. Let
$Y'$ (resp. $Y''$) be the multiprojective space with $k$ factors, with $\PP^{n_h}$ as its $h$-th factor for all $h\ne i$ and
with $V$ (resp. $\PP^{n_i-v-1}$) as its $i$-th factor. The multiprojective space $Y'$ (resp. $Y''$) has $k$ non-trivial
factors if and only if $v>0$ (resp. $v\le n_i-2$). Let $\ell _{V,i}: Y\setminus Y' \to Y''$ denote the morphism defined by the
formula $\ell _{V,i}(a1,\dots ,a_k) = (b_1,\dots ,b_k)$ with $b_h=a_h$ for all $h\ne i$ and $b_i =\ell _V(a_i)$. When $K$ is
infinite we use the Zariski topology on $\PP^n(K)$ and the $K$-points of the Grassmannians.

\begin{remark}\label{f3}
Fix a finite set $S\subset Y$, an integer $i\in \{1,\dots ,s\}$ and an integer $v$ such that $0\le v\le n_i-2$. Assume for the
moment that
$K$ is infinite. Let $V$ be a general (for the Zariski topology) $v$-dimensional linear subspace. Then $\ell _{V,i|S}$ is
injective. For fixed
$k, n_1,\dots ,n_k$ there is an integer $q_0$ such that for all $q\ge 2$ there is a $v$-dimensional linear subspace $V\subset
\PP^{n_i}(\FF _q)$ such that $\ell _{V,i|S}$ is injective. When $\ell _{V,i|S}$ is injective, we obviously have $h^1(Y,\Ii
_S(1,\dots ,1)) \le h^1(Y'',\Ii _{\ell _{V,i}(S)}(1,\dots ,1))$. If $Y$ is the minimal multiprojective subspace containing
$S$, then $Y''$ is the minimal multiprojective space containing $\ell _{V,i}(S)$.
\end{remark}

\section{Rational normal curves inside a Segre variety}
Fix positive integers $k$ and $n_i$, $1\le i\le k$. Set $Y:= \PP^{n_1}\times \cdots \times \PP^{n_k}$. Let $\Bb (n_1,\dots
,n_k)$
denote the set of all integral curves $D\subset \PP^1$ such that $D = h(\PP^1)$ with $h =(h_1,\dots ,h_k): \PP^1\to Y$ with
$h_i: \PP^1\to \PP^{n_i}$ an embedding with $h_i(\PP^1)$ a rational normal curve of $\PP^{n_i}$. The set $\Bb (n)$ of all
rational normal curves of $\PP^n$ is a rational variety of dimension $n(n+3)$. Thus $\Bb (n_1,\dots ,n_k)$ is parametrized
by an irreducible variety. For any $D\in \Bb (n_1,\dots ,n_k)$ we have $\dim \langle \nu (D)\rangle = n_1+\cdots +n_k$
and $\nu (D)$ is a degree $n_1+\cdots +n_k$ rational normal curve of $\langle \nu (D)\rangle$. Let
$D\subset Y$ be a curve. Obviously
$D\in
\Bb (n_1,\dots ,n_k)$ if and only if the following conditions are satisfied:
\begin{itemize}
\item[(a)] $D$ is an integral curve;
\item[(b)] $\pi _{i|D}$ is birational onto its image for all $i=1,\dots ,k$;
\item[({c})] $p_i(D)$ is a rational normal curve of $\PP^{n_i}$ for all $i=1,\dots ,k$.
\end{itemize}
 Obviously $D\in \Bb (n_1,\dots ,n_k)$ if and only if the following
conditions are satisfied:
\begin{itemize}
\item[($a_1$)] $D$ is an integral curve;
\item[($b_1$)] $\deg (\nu (D)) =n_1+\cdots +n_k$;
\item[($c_1$)] $Y$ is the minimal multiprojective subspace of $Y$ containing $D$.
\end{itemize}

\begin{remark}
The integer $n_1+\cdots +n_k$ is the minimal degree of a connected and reduce curve $\nu (D)$, $D\subset Y$.
\end{remark}

\begin{remark}\label{n1}
Fix $D\in \Bb (n_1,\dots ,n_k)$ and any finite subset $S\subset D$. Since $\dim \langle \nu (D)\rangle = n_1+\cdots +n_k$
and $\nu (D)$ is a rational normal curve of $\langle \nu (D)\rangle$, we have
$$e(S) = \max \{0,\# (S)-n_1-\cdots -n_k-1\}.$$
Since $\# (L\cap \nu (D))\le 1$ for each $L\subset \nu (Y)$, $S$ is minimal.
\end{remark}

\begin{remark}\label{up1}
Take $Y:= (\PP^1)^k$. Let $T\subset Y$ be an integral curve. The \emph{multidegree} $(a_1,\dots ,a_k)$ of $T$ is defined in the
following way. If $\pi _i(T)$ is a point, then set $a_i:= 0$. If $\pi _i(T)=\PP^1$ let $a_i$ be the degree of the morphism
$\pi _{i|T}: T\to \PP^1$. If $k=3$, we say \emph{tridegree} instead of multidegree. Note that if $a_i>0$ for all $i$, then $T$
is not contained in a proper multiprojective subspace of $Y$. If $a_i=1$ for some $i$, then $\pi _{i|T}: T \to \PP^1$ is a
degree $1$ morphism between integral curves with the target smooth. By Zariski's Main Theorem (\cite{h}) $\pi _{i|T}$ is an
isomorphism and in particular $T\cong \PP^1$. Let $\Bb _k$ denote the set of all $T\subset Y$ with multidegree $(1,\dots ,1)$.
We just say that for any $T\in \Bb _k$, $T\cong \PP^1$ and each $\pi _{i|T}: T\to \PP^1$ is an isomorphism. Thus $\Bb _k$ (as
algebraic set) is isomorphic to $\mathrm{Aut}(\PP^1)^k$. We have $\Bb _k =\Bb (1,\dots ,1)$.
\end{remark}

 Obviously $D\in \Bb _k$ if and only if the following
conditions are satisfied:
\begin{itemize}
\item[($a_2$)] $D$ is an integral curve;
\item[($b_2$)] $D$ has multidegree $(1,\dots ,1)$.
\end{itemize}

\begin{example}\label{n2}
Fix positive integers $e$, $k$ and $n_i$, $1\le i\le k$, such that $k\ge 2$. Set $Y = \PP^{n_1}\times \cdots \PP^{n_k}$
and $m:= \max \{n_1-1,n_2,\dots ,n_k\}$. Fix
a line $L\subseteq \PP^{n_1}$ and $e+2$ points $a_1,\dots ,a_{e+2}\in L$. Fix $o_i\in \PP^{n_i}$, $2\le i\le k$.
Let $b_i\in Y$, $1\le i\le e+2$, be the point of $Y$ with $\pi _1(b_i)=a_i$ and $\pi _h(b_i) =o_h$ for all $h\in \{2,\dots
,k\}$.
Fix any $A\subset Y$ such that $\# (A)=m$, $\pi _1(A)\cup L$ spans $\PP^{n_1}$ and $\pi _h(A)$ spans $\PP^{n_h}$ for all $h\in \{2,\dots
,k\}$. Set $S:= A\cup \{b_1,\dots ,b_{e+2}\}$. We have $\# (S) =e+2+m$, $S$ is not contained in a proper multiprojective
subspace of $Y$ and $e(S)=e$. 
\end{example}

\begin{proposition}\label{n3}
Fix positive integers $k$ and $n_i$, $1\le i\le k$, such that $k\ge 2$ and $n_i\le n_1$ for all $i$.
Let $S\subset Y$ be a finite subset of $Y$ such that $e(S) >0$ and $S$ is not contained in a proper multiprojective subspace of
$Y$. Then

$$\# (S) \ge e(S)+n_1+1.$$ 
\end{proposition}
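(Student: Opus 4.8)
The plan is to reduce the statement to the single inequality $\dim\langle\nu(S)\rangle \ge n_1$ and then to prove the latter from the tensor structure of $H^0(\Oo_Y(1,\dots,1))$. By the definition of $e(S)$ we have $\#(S) = e(S) + 1 + \dim\langle\nu(S)\rangle$, so the claimed bound $\#(S)\ge e(S)+n_1+1$ is equivalent to $\dim\langle\nu(S)\rangle\ge n_1$. Note that in this reformulation the hypothesis $e(S)>0$ plays no role, and the requirement $n_i\le n_1$ only serves to single out the factor of largest dimension: the argument below applies verbatim to any factor $i$ and yields $\dim\langle\nu(S)\rangle\ge n_i$, so choosing the largest factor gives the strongest bound.

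Next I would set up the Segre/K\"unneth splitting $H^0(\Oo_Y(1,\dots,1)) = V\otimes W$, where $V := H^0(\Oo_{\PP^{n_1}}(1))$ (so $\dim V = n_1+1$) and $W := H^0(\Oo_{Y_1}(1,\dots,1))$, and I would identify $\PP^r = \PP(V^*\otimes W^*)$. Under this identification each $\nu(s)$, for $s\in S$, is the class of a decomposable (rank one) tensor $u_s\otimes w_s$, where $u_s\in V^*$ is the evaluation at $\pi_1(s)$ and $w_s\in W^*$ is the evaluation at $\eta_1(s)\in Y_1$; in particular $w_s\ne 0$ for every $s$.

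Since $S$ is nondegenerate we have $\langle\pi_1(S)\rangle=\PP^{n_1}$, so the vectors $\{u_s : s\in S\}$ span $V^*$. I would then choose $s^{(0)},\dots,s^{(n_1)}\in S$ so that $u_{s^{(0)}},\dots,u_{s^{(n_1)}}$ form a basis of $V^*$ (these points are automatically distinct, having distinct first projections), and let $v_0,\dots,v_{n_1}\in V$ be the dual basis, with $\langle v_i, u_{s^{(j)}}\rangle=\delta_{ij}$. The key step is to show that $u_{s^{(0)}}\otimes w_{s^{(0)}},\dots,u_{s^{(n_1)}}\otimes w_{s^{(n_1)}}$ are linearly independent in $V^*\otimes W^*$: applying the contraction $v_i\otimes\mathrm{id}_{W^*}\colon V^*\otimes W^*\to W^*$ to a relation $\sum_j c_j\,u_{s^{(j)}}\otimes w_{s^{(j)}}=0$ collapses it to $c_i\, w_{s^{(i)}}=0$, whence $c_i=0$ because $w_{s^{(i)}}\ne0$. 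This produces $n_1+1$ linearly independent points among $\nu(S)$, giving $\dim\langle\nu(S)\rangle\ge n_1$, and the proposition follows.

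The whole argument is pure linear algebra over $K$ (point evaluations, the K\"unneth decomposition, and dual bases all make sense over an arbitrary field), so no characteristic or algebraic-closedness hypothesis intervenes. The one point that requires care, and which I regard as the crux, is the contraction step establishing linear independence of the chosen decomposable tensors: this is exactly where the multiprojective (tensor) structure of $X$, rather than the abstract linear geometry of $\PP^r$ alone, enters, and it is what upgrades the spanning statement for $\pi_1(S)$ inside the single factor $\PP^{n_1}$ to a spanning statement for $\nu(S)$ inside $\PP^r$. It would also be worth remarking that the bound is sharp, since Example \ref{n2} with $m=n_1-1$ realizes equality.
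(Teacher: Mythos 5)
Your proof is correct, but it takes a genuinely different route from the paper's. You reduce the claim, via the identity $\#(S)=e(S)+1+\dim\langle\nu(S)\rangle$, to the single inequality $\dim\langle\nu(S)\rangle\ge n_1$, and prove that directly from the tensor structure: nondegeneracy gives $n_1+1$ points of $S$ with projectively independent first projections, and the contraction with a dual basis of $V$ shows the corresponding rank-one tensors $u_s\otimes w_s$ (with $w_s\ne 0$) are linearly independent. The paper instead argues cohomologically and by induction (on the number of factors and on $e(S)$): it places divisors $H\in|\Oo_Y(\epsilon_1)|$ through suitable subsets of $S$ and applies Lemma \ref{ee0} to force $h^1(\Ii_{S\setminus S\cap H}(\hat{\epsilon}_1))>0$, hence $\#(S\setminus S\cap H)\ge 2$, and so on (as printed, that proof is in fact left fragmentary in places). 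Your argument buys simplicity and generality: it needs neither $e(S)>0$, nor $k\ge 2$, nor even full nondegeneracy (spanning in the largest factor alone suffices), and it works verbatim over any field. What the paper's heavier machinery buys is continuity of method: the same Lemma \ref{ee0} technique is what, in Proposition \ref{n4}, exploits the extra hypothesis of \emph{minimality} to raise the bound to $\#(S)\ge k+e(S)+1$ --- a refinement invisible to the pure contraction argument, which can never yield more than $\max_i n_i$ (e.g., only $1$ when $Y=(\PP^1)^k$). Your sharpness remark via Example \ref{n2} is also correct, with the small caveat that $m=n_1-1$ there requires $n_i<n_1$ for all $i\ge 2$.
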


\begin{proof}
Since the proposition is trivial if $k=1$ we may assume $k\ge 2$ and that the proposition is true for multiprojective spaces with smaller dimension. Up to a permutation of the factors of $Y$ we may assume $\Aa =\{1,\dots ,c\}$. We first consider sets $S$ with $e(S)=1$. Set $s:= \# (S)$. Fix $E\subseteq S$ such that $\# (E)=\min \{n_1,s\}$. Since $h^0(\Oo _Y(\epsilon _1))=1$,

Take $H\in |\Oo _{Y'}(\epsilon _1)|$ containing at least $m_1$ points of $S$. Since $S\nsubseteq H$, Lemma \ref{n3} gives
$h^1(\Ii _{S\setminus S\cap H}(\hat{\epsilon}_1)) >0$. Thus $\# (S\setminus S\cap H)\ge 2$. Hence $\# (S)\ge n_1+2$. Now assume $\# (S)=n_1+2$. Hence 

Now assume $e(S)\ge 2$. We use induction on the integer $e(S)$. Fix $p\in S$ and set $S':= S\setminus \{p\}$. We have $e(S)-1 \le e(S')\le e(S)$. Assume
$e(S') =e(S)-1$. In this case we have $\langle \nu (S')\rangle =\langle \nu (S)\rangle$ and hence $S'$ generates $Y$. If $e(S)=e(S')$ use that the maximal dimension of a factor of the multiprojective space spanned by $S'$ is at least
$n_1-1$.\end{proof}

\begin{proposition}\label{n4}
Let $S\subset Y =(\PP^1)^k$ be a minimal and nondegenerate set with $e(S)>0$. Then: 

\quad (a) $\# (S)\ge k+e(S)+1$.

\quad (b) There is $D\in \Bb _k$ containing $S$ if and only if the $k$ ordered sets $\pi _i(S)$, $1\le i\le s$, are (with the chosen order) projectively equivalent, i.e., there is an ordering $q_1,\dots ,q_s$
of the points of $S$ and for all $i\ne j$ isomorphisms $h_{ij}: \PP^1\to \PP^1$ such that $h_{ij}(\pi _i(q_h)) =\pi _j(q_h)$ for all $h\in \{1,\dots ,s\}$.
\end{proposition}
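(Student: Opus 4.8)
The plan splits according to the two parts. For (b) the statement is essentially a parametrization, and I would treat the two implications separately. For the ``only if'' direction, given $D\in\Bb_k$ with $S\subset D$, I would write $D=h(\PP^1)$ with $h=(h_1,\dots,h_k)$ and each $h_i\colon\PP^1\to\PP^1$ an isomorphism (Remark \ref{up1}); pulling $S$ back along $h$ produces an ordering $q_1,\dots,q_s$ of its points, and the isomorphisms $h_{ij}:=h_j\circ h_i^{-1}$ then satisfy $h_{ij}(\pi_i(q_h))=\pi_j(q_h)$ for all $h$, which is exactly projective equivalence. For the ``if'' direction, given the ordering and the $h_{ij}$, I would take the curve $D:=\{(t,h_{12}(t),\dots,h_{1k}(t)):t\in\PP^1\}$; it has multidegree $(1,\dots,1)$, hence lies in $\Bb_k$ by Remark \ref{up1}, and evaluating the parametrization at $t=\pi_1(q_h)$ shows $q_h\in D$, so $S\subset D$. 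I would also record that the existence of the $h_{ij}$ forces each $\pi_i|_S$ to be injective (a coincidence $\pi_i(q_a)=\pi_i(q_b)$ would propagate through the $h_{ij}$ to $q_a=q_b$), which is precisely what guarantees that the curve $D$ meets each point of $S$.

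For (a), since $e(S)=\#(S)-1-\dim\langle\nu(S)\rangle$, the inequality $\#(S)\ge k+e(S)+1$ is equivalent to $\dim\langle\nu(S)\rangle\ge k$, and this is what I would prove, by induction on $k$. The case $k=1$ is immediate from nondegeneracy. For the inductive step I would peel off the last factor through the projection $\eta_k\colon Y\to Y_k=(\PP^1)^{k-1}$. Minimality supplies that $\eta_k|_S$ is injective, and then the residual sequence for a divisor $H\in|\Oo_Y(\epsilon_k)|$ together with the vanishing of $R^1(\eta_k)_*$ on the fibres (the restriction of $\Oo_Y(\hat{\epsilon}_k)$ to a fibre $\PP^1$ is trivial) yields the comparison $e_{Y_k}(\eta_k(S))\ge e(S)>0$; moreover $\eta_k(S)$ is nondegenerate in $Y_k$, since $\pi_i(\eta_k(S))=\pi_i(S)$ for $i<k$.

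This comparison alone loses exactly one unit, so the point is to recover it geometrically. Writing $V_1\otimes\cdots\otimes V_k=W\otimes V_k$ with $W=V_1\otimes\cdots\otimes V_{k-1}$, a covector $\lambda\in V_k^{*}$ gives a linear projection $\rho_\lambda\colon\PP^r\dashrightarrow\PP(W)$ from the center $Z_\lambda=\PP(W\otimes v_\lambda)$, where $v_\lambda$ spans $\ker\lambda$, and on $\nu(Y)$ this projection realizes $\eta_k$. Choosing $\lambda$ so that $v_\lambda$ represents $\pi_k(x_0)$ for a point $x_0\in S$ forces $\nu(x_0)\in\langle\nu(S)\rangle\cap Z_\lambda$, so $\rho_\lambda$ strictly drops the dimension of the span. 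The projection–dimension formula then gives
$$\dim\langle\nu(S)\rangle\ge\dim\langle\nu_{Y_k}(\eta_k(S\setminus S_{p_0}))\rangle+1,$$
where $p_0=\pi_k(x_0)$ and $S_{p_0}=S\cap\pi_k^{-1}(p_0)$, and applying the inductive hypothesis to the residual set $\eta_k(S\setminus S_{p_0})$ would close the argument.

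The hard part will be ensuring that the set fed into the inductive hypothesis still satisfies all three requirements — nondegenerate, $e>0$, and \emph{minimal} — in $(\PP^1)^{k-1}$. Deleting the fibre $S_{p_0}$ can destroy nondegeneracy in a factor whose second coordinate value occurs only on $S_{p_0}$, and, more seriously, $\eta_k(S)$ need not be minimal, since two points of $S$ differing in exactly the $k$-th and one other coordinate collapse onto a fibre line after forgetting the $k$-th factor. I would address the first difficulty by choosing the fibre $p_0$ (equivalently, the factor peeled off) so that the residual stays nondegenerate, and the second by applying elementary decreasings (Example \ref{ee1}, Remark \ref{ee1.0}) to pass from the non-minimal projection to a minimal set whose span dimension and value of $e$ are controlled, to which the inductive hypothesis does apply; the degenerate case in which the residual becomes linearly independent I would dispatch by a direct count, bounding the fibre sizes via minimality. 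I expect the delicate verification of hypothesis persistence under projection to be the real obstacle, and I would lean on the rational normal curves of $\Bb_k$ developed in this section — as the surrounding material indicates — to settle the tightest configurations.
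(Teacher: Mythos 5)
Your part (b) is correct and is essentially the paper's own proof: one direction composes the coordinate isomorphisms of a parametrization $h=(h_1,\dots ,h_k)$ of $D$ (i.e.\ $h_{ij}=h_j\circ h_i^{-1}$), and the converse builds $D$ as the image of $t\mapsto (t,h_{12}(t),\dots ,h_{1k}(t))$, exactly as the paper does after normalizing $h_1$ to be the identity.

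Part (a) is where the genuine gap lies, and it is the one you name yourself. Your skeleton --- restate the bound as $\dim \langle \nu (S)\rangle \ge k$, induct on $k$, project along $\eta _k$ from a centre meeting $\langle \nu (S)\rangle$ in $\nu (x_0)$ so the span drops by at least one --- is arithmetically sound, and it is the same induction the paper runs, phrased in linear-algebra rather than cohomological language. But the entire content of the proposition sits in the step you defer: proving that the residual set $\eta _k(S\setminus S_{p_0})$ is still minimal, nondegenerate and linearly dependent in $(\PP^1)^{k-1}$. None of your proposed patches is carried out, and the two central ones meet concrete obstructions. First, elementary decreasings: replacing two points on a line by one point of that line lowers the cardinality by one and the span by at most one, so whenever the span fails to drop, $e$ drops by one; a careless decreasing can therefore make the residual set linearly independent, after which the inductive hypothesis is vacuous and gives no lower bound on its span, cancelling exactly the unit gained from the projection. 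Avoiding this forces a careful choice of the replacement point (on the intersection of the line with the span of the remaining points, when nonempty), plus bookkeeping of coincidences and of nondegeneracy under iteration; none of this is in your plan, and Example \ref{ee1} and Remark \ref{ee1.0} do not supply it. Second, the degenerate case: minimality does \emph{not} bound the fibres of $\pi _k$ on $S$ (it only forces two points of a fibre to differ in at least two of the remaining coordinates), so fibres can be arbitrarily large and no ``direct count'' is available; choosing a singleton fibre, which would keep $e>0$ after projection, is impossible precisely in the configurations where every fibre of every $\pi _i$ has at least two points, and those are the hard ones. For comparison, the paper disposes of the persistence problem in one line, by asserting that minimality makes every $\pi _{i|S}$ injective (so fibres of $\pi _k$ are singletons and minimality and nondegeneracy of the projected set are immediate), and it also first reduces part (a) to the bound $\# (S)\ge k+2$ by passing to a subset $A\subset S$ with $\# (A)=\# (S)-e(S)+1$, so that only one unit of dependence must survive the induction. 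Since minimality as defined only yields injectivity of the maps $\eta _{i|S}$, your worry is well founded and you have located the true crux; but locating it is not resolving it, and as written your argument for (a) is incomplete.
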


\begin{proof}
Set $s:= \# (S)$. Since $S$ is minimal, each $\pi _{i|S}$ is injective. Assume for the moment $e(S)\ge 2$ (hence $\# (S)\ge e(S)+2)$ and take any $A\subset S$ such that $\# (A)=\# (S)-e(S)+1$. Since $S$ is minimal and $\# (A)\ge 2$, $A$ is minimal and spans $Y$. Thus to prove part (a) it is sufficient to prove that $s\ge k+2$ if $e(S)>0$.
We use induction on $k$, using in the case $k=1$ the stronger (obvious) observation that any two points of a line are linearly independent. Assume $k\ge 2$. Fix $o\in S$
and set $\{H\}:= |\Ii _o(\epsilon _k)|$. Apply Lemma \ref{ee0} to any partition of $S$ into two proper subsets. Since $S\nsubseteq H$, we have $h^1(\Ii _{S\setminus S\cap H}(\hat{\epsilon} _k)) >0$. Hence $\# (S\setminus S\cap H) \ge 2$. Since $S$ is minimal, $\eta _{k|S}$ is injective, $\eta _k(S)$ is minimal in $Y_k$ and $h^1(\Ii _{S\setminus S\cap H}(\hat{\epsilon} _k))=h^1(Y_k,\Ii _{\eta _k(S\setminus S\cap H)}(1,\dots ,1))$. 
Thus $\eta _k(S\setminus S\cap H)$ is minimal. Since $\# (\pi _i(S)) =s$ for all $i$ and $\# (S\setminus S\cap H) =2$, $\eta _k(S)$ generates $Y_k$. The inductive assumption gives $\# (S\setminus S\cap H) \ge k+1$, i.e., $s\ge k+2$. 

Now we prove part (b). First assume the existence of $D\in \Bb _k$ containing $S$. Write $D = h(\PP^1)$ with $h =(h_1,\dots ,h_k)$. Each $\pi
_{i|D}: D \to \PP^1$ is an isomorphism. Hence $\pi _{i|D}\circ \pi _{j|D}^{-1}: \PP^1\to \PP^1$ is an isomorphism sending $\pi
_j(S)$ onto $\pi _S(S)$. Now we assume that all  sets $\pi _i(S)$, $1\le i\le s$, are projectively equivalent. We order the
points $q_1,\dots ,q_s$ of $S$. For any $i\ge 2$ let $f_i: \PP^1\to \PP^1$ be the isomorphism sending $\pi _1(q_x)$ to $\pi
_j(q_x)$ for all $x\in \{1,\dots ,s\}$. We take the target of $\pi _1$ as the domain, $\PP^1$, of the morphism $h =(h_1,\dots
,h_k):
\PP^1\to Y$ we want to construct. With this condition $h_1$ is the identity map. Set $h_i:= f_i$, $i=2,\dots ,k$. By
construction and the choice of $h_1$ as the identity map, we have $h(q_x) =q_x$ for all $x\in \{1,\dots ,s\}$. We have $D:=
h(\PP^1)\in \Bb _k$ and $S\subset D$.
\end{proof}

\begin{proposition}\label{n4.00}
Set $Y: =  \PP^{n_1}\times \cdots \times \PP^{n_k}$, $n_i>0$ for all $i$, such that $k\ge 2$. Set $m:= \max \{n_1,\dots
,n_k\}$. Fix a positive integer $e$. The integer $m+k+e$ is the minimal cardinality of a minimal and nondegenerate set
$S\subset Y$ such that
$e(S)=e$.
\end{proposition}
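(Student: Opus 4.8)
The plan is to prove separately that $m+k+e$ bounds $\#(S)$ from below for every minimal nondegenerate $S$ with $e(S)=e$, and that this value is attained. Since $\dim\langle\nu(S)\rangle=\#(S)-1-e(S)$, the lower bound $\#(S)\ge m+k+e$ is equivalent to the span estimate $\dim\langle\nu(S)\rangle\ge m+k-1$ for minimal nondegenerate $S$ with $e(S)>0$, and I would run the induction in that form. First I would reduce to $e(S)=1$: deleting points of $S$ one at a time while keeping every factor spanned, I reach $A\subseteq S$ with $\#(A)=\#(S)-e(S)+1$; then $A$ is again minimal and nondegenerate, and $e(A)\ge e(S)-(\#(S)-\#(A))=1$, so the $e=1$ bound $\#(A)\ge m+k+1$ gives $\#(S)=\#(A)+e(S)-1\ge m+k+e$. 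The only point to check is that nondegeneracy survives the deletions, which holds as long as no point indispensable for spanning some $\PP^{n_i}$ is removed.

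For $e(S)=1$ I would induct on $k$, the case $k=1$ being the spanning estimate $\#(S)\ge n_1+2$ in $\PP^{n_1}$. Reorder so that $n_1=m$ and peel a factor whose removal leaves the maximal factor-dimension equal to $m$ (possible since $k\ge 2$); call it the $k$-th. Take $H=\pi_k^{-1}(H')$ with $H'\subset\PP^{n_k}$ a hyperplane chosen so that $S\cap H$ is nonempty and linearly independent in $H$, and so that $\eta_k(S\setminus S\cap H)$ still spans every remaining factor. The residual sequence of $H$ for $\Oo_Y(1,\dots,1)$ has kernel term $\Ii_{S\setminus S\cap H}(\hat\epsilon_k)$, and, using $h^1(H,\Ii_{S\cap H}(1,\dots,1))=0$, its cohomology forces $h^1(\Ii_{S\setminus S\cap H}(\hat\epsilon_k))\ge e(S)>0$. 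As $\hat\epsilon_k=\eta_k^{*}\Oo_{Y_k}(1,\dots,1)$ and $\eta_k|_S$ is injective by minimality, this says that $\eta_k(S\setminus S\cap H)$ is nondegenerate and linearly dependent in $Y_k$, so by the inductive hypothesis $\#(\eta_k(S\setminus S\cap H))\ge m+(k-1)+1$; since at least one point was removed, $\#(S)\ge m+k+1$. The main obstacle is that $\eta_k(S\setminus S\cap H)$ need not be \emph{minimal} in $Y_k$: a pair of points of $S$ differing in exactly the two factors $j$ and $k$ projects onto two points of a line of $\nu(Y_k)$. This is the heart of the matter; since each such bad pair has distinct $k$-th projections, I would dispose of it by choosing $H'$ to separate these pairs (removing one member of each into $S\cap H$), or by passing to the minimal model of $\eta_k(S\setminus S\cap H)$ via elementary decreasings (Remark~\ref{ee1.0}) while tracking the effect on $e$.

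For attainment I must build, for each datum, a minimal nondegenerate $S$ with $e(S)=e$ and $\#(S)=m+k+e$. If $n_2=\dots=n_k=1$ this is immediate: take $D\in\Bb(n_1,1,\dots,1)$, so that $\nu(D)$ is a rational normal curve of degree $n_1+(k-1)=m+k-1$, and let $S$ be $m+k+e$ general points of $D$; Remark~\ref{n1} shows $S$ is minimal with $e(S)=\max\{0,\#(S)-(m+k-1)-1\}=e$, and generality gives nondegeneracy in $Y$. For general dimensions no nondegenerate curve of $\nu$-degree $m+k-1$ exists (nondegeneracy forces $\nu$-degree $\ge\sum n_i>m+k-1$), so I would instead argue by a dimension count: inside $Y^{N}$ with $N=m+k+e$ the locus where the $N$ Segre points span a linear space of dimension at most $m+k-1$ is nonempty of the expected dimension, and I would verify that its general member has $e(S)$ equal to $e$ exactly, spans each $\PP^{n_i}$, and has no two points on a line of $X$. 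Establishing this triple genericity — a single $e$-dimensional space of dependencies together with full spanning and minimality — is the delicate point of the existence half, with the curve case above serving as the guiding model.
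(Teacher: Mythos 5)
Both halves of your proposal contain genuine gaps; the existence half is the more serious, because the missing step is not a technicality but the whole content of that half.

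\textbf{Existence.} Your curve construction settles only the case $n_2=\cdots =n_k=1$. For general dimensions the proposed dimension count is not a proof: the locus in $Y^N$, $N=m+k+e$, where the $N$ Segre points span at most a $\PP^{m+k-1}$ is a determinantal locus with many components, and what has to be produced is a component whose general member is \emph{simultaneously} nondegenerate, minimal and with $e(S)$ exactly $e$ --- which is precisely the construction problem itself; ``nonempty of the expected dimension'' gives none of these three conditions, since they are open conditions that must be verified on the right component, not on the whole locus. The paper closes this with a padding trick for which you already had every ingredient: fix a multiprojective subspace $Y'\cong (\PP^1)^k\subset Y$, take $S'$ equal to $k+e+1$ points on some $C\in \Bb _k$ (by Remark \ref{n1}, $S'$ is minimal with $e(S')=e$, and it is nondegenerate in $Y'$), then add $m-1$ sufficiently general points of $Y$. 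Each general point lies outside the span of the previous ones, so $e$ is unchanged; a line plus $m-1$ general points spans $\PP^{n_i}$ for every $n_i\le m$, so the enlarged set is nondegenerate in $Y$; and generality prevents any new pair from lying on a line of $X$. The total is $(k+e+1)+(m-1)=m+k+e$.

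\textbf{Lower bound.} The obstacle you flag yourself --- that $\eta _k(S\setminus S\cap H)$ need not be minimal in $Y_k$ --- is real and remains unresolved in your text: the hyperplane $H'$ is already required to make $S\cap H$ linearly independent and to keep the complement spanning every remaining factor, and you give no argument that it can in addition absorb one member of every pair of points of $S$ differing in exactly the two factors $j$ and $k$; the alternative via elementary decreasings changes both the cardinality and $e$, and this bookkeeping is never done. Note that the paper does not take your route at all: its base result, Proposition \ref{n4}, proves $\# (S)\ge k+e(S)+1$ on $(\PP^1)^k$ for \emph{all} $e$ at once, and for general $n_i$ it inducts on $m+k$ using the within-factor projections $\ell _{\{o\},i}$ of Remark \ref{f3} (projecting one factor from a point of $S$, so $m+k$ drops while the number of factors is preserved), rather than forgetting a factor via $\eta_k$; this is a genuinely different reduction, designed to land on the $(\PP^1)^k$ case. (In fairness, the paper's own lower-bound argument is only a sketch, but your specific route stalls exactly at the point you call the heart of the matter.) Separately, your preliminary reduction to $e=1$ assumes one can delete $e(S)-1$ points without losing nondegeneracy; this needs proof, but is fixable: delete points of the kernel (Lemma \ref{p4}). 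Indeed, if $\pi _i(p)\notin \langle \pi _i(S\setminus \{p\})\rangle$ for some $i$, then a section of $\Oo _Y(1,\dots ,1)$ obtained as a product of a linear form vanishing on $\langle \pi _i(S\setminus \{p\})\rangle$ but not at $\pi _i(p)$ with forms nonvanishing at the other coordinates of $p$ shows $\nu (p)\notin \langle \nu (S\setminus \{p\})\rangle$, so $p$ is inessential; hence every kernel point can be deleted while all factors stay spanned, and each such deletion lowers $e$ by exactly one.
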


\begin{proof}
With no loss of generality we may assume $n_1=m$. Fix $Y'\subset Y$ with $Y'=(\PP^1)^k$ and take
$S'\subset S$ with
$\# (S')=k+e+1$, $S'$ nondegenerate and minimal (we may take as $S'$ $k+e+1$ points on any $C \in \Bb _k$). Then add
$m-1$ sufficiently general points of $Y$. 

To prove the minimality of the integer $k+e+1$ one can use induction on the integer $m+k$ using linear projections in the
single factors $\ell _{\{o\},i}$ from points of $S$ (Section \ref{Sl}). The starting point of the induction is Proposition
\ref{n4}.
\end{proof}

\begin{question}
Which are the best lower bounds for $\# (S)$ in Proposition \ref{n4.00} if we also impose that $S$ has no inessential
points (resp. it is strongly essential)?
\end{question}

\section{Linearly dependent subsets with low cardinality}\label{Sc}
Unless otherwise stated $Y =  \PP^{n_1}\times \cdots \times \PP^{n_k}$, $n_i>0$ for all $i$, and $S\subset Y$ is nondegenerate
(i.e., $S\nsubseteq Y'$ for any multiprojective space $Y'\subsetneq Y$)  and linear dependent. Since $\nu $ is an embedding,
we have $\# (S)\ge 3$.

\begin{remark}\label{e1}
Assume $\# (S) =3$, i.e., assume that $\nu (S)$ spans a line. Since $\nu (Y)$ is cut out by quadrics, we have $\langle \nu
(S)\rangle \subseteq \nu (Y)$. Since $S$ is nondegenerate, we have $k=1$ and $n_1=1$.
\end{remark}

\begin{proposition}\label{e2}
Let $S\subset Y = \prod _{i=1}^{k}\PP^{n_i}$, $n_i>0$ for all $i$, be a nondegenerate circuit. Assume $\# (S) =4$. Then either
$k=1$ and
$n_1=2$ or
$k=2$ and $n_1=n_2=1$. 

Assume $k=2$. In this case $S$ is contained in a unique $D\in |\Oo _Y(1,1)|$. 

\quad (a) Assume that $D$ is reducible, say $D=D_1\cup D_2$
with $D_1\in |\Oo _Y(1,0)|$ and $D_2\in |\Oo _Y(0,1)|$, then $\# (S\cap D_1) =\# (S\cap D_2) =2$ and $D_1\cap D_2\cap S=\emptyset$. For any choice
of $A\subset S$ with $\# (A\cap D_1) = \# (A\cap D_2)=1$, the set $\langle \nu (A)\rangle \cap \langle \nu (S\setminus A)\rangle$ is a single point, $q$. We have $r_X(q) =2$ and $A, S\setminus A\in \Ss (Y,q)$.

\quad (b) Assume that $D$ is irreducible. For any choice of a set $A\subset S$ such that $\# (A)=2$ the set $\langle \nu (A)\rangle \cap \langle \nu (S\setminus A)\rangle$ is a single point, $q$. We have $r_X(q) =2$ and $A, S\setminus A\in \Ss (Y,q)$.
\end{proposition}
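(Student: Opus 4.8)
The plan is to separate the classification of $(k,n_1,\dots,n_k)$ from the geometry of the $\PP^1\times\PP^1$ case. Throughout I would use that $S$ a circuit means $e(S)=1$, so $\dim\langle\nu(S)\rangle=\#(S)-2=2$: the four points $\nu(S)$ span a plane $\Pi\cong\PP^2$ and no three of them are collinear. I would fix the dependence relation $\sum_{j=1}^{4}\lambda_j\nu(p_j)=0$, unique up to scale; since $S$ is a circuit, every $\lambda_j\neq0$. The engine of the classification is a flattening of this relation. For each factor $i$ write $\nu(p_j)=u_j\otimes\eta_i(p_j)$, where $u_j\in\CC^{n_i+1}$ represents $\pi_i(p_j)$ and $\eta_i(p_j)$ is viewed as a rank-one tensor in the ambient space of the Segre of $Y_i$. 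Nondegeneracy gives that $u_1,\dots,u_4$ span $\CC^{n_i+1}$, so their space of linear relations has dimension $4-(n_i+1)=3-n_i$. Applying $\mathrm{id}\otimes\xi$ to $\sum_j\lambda_j u_j\otimes\eta_i(p_j)=0$ for every linear functional $\xi$ and using $\lambda_j\neq0$, I would conclude $\dim\langle\eta_i(p_1),\dots,\eta_i(p_4)\rangle\le 3-n_i$. In particular $n_i\le2$; and if $n_i=2$ then all $\eta_i(p_j)$ coincide, which for $k\ge2$ contradicts nondegeneracy (some $h\neq i$ would have $\pi_h(S)$ a single point). Hence either $k=1$, where nondegeneracy together with $\dim\langle\nu(S)\rangle=2$ forces $n_1=2$, or $k\ge2$ and $Y=(\PP^1)^k$.

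Next I would show $k\le2$ when $Y=(\PP^1)^k$. Here the flattening bound reads $\rho_i:=\dim\langle\eta_i(p_j)\rangle\le2$, and $\rho_i\ge2$ (else $\eta_i(S)$ is a point, again degenerate), so $\rho_i=2$: the four points $\eta_i(p_j)$ lie on a line $\ell_i$ of the ambient space and on the Segre variety of $(\PP^1)^{k-1}$. If $\ell_i$ were contained in that Segre it would be a ruling, forcing $\pi_h(S)$ constant for all but one $h\neq i$, impossible for $k-1\ge2$; hence $\ell_i$ meets the Segre in at most two points, so the $\eta_i(p_j)$ take exactly two values. By Remark \ref{p2} each fibre of $\eta_i$ carries at most two points of $S$, so these values split $S$ into two pairs, giving a partition $P_i$ of the four points. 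Two points paired by $P_i$ agree off the $i$-th factor, so a given pair cannot occur in $P_i$ and $P_{i'}$ for $i\neq i'$ (the two points would then agree everywhere and coincide); since there are only three partitions of four elements into two pairs, $k\le3$. For $k=3$ the three forced partitions are distinct, so any two of them, indexed by factors $a,b$, have common agreement factor $c$ and their pairs generate the total relation in factor $c$, making all four points share their $c$-th coordinate, against nondegeneracy. Thus $k=2$.

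For the $k=2$ part I would use that $X\subset\PP^3$ is a smooth quadric and $\Pi=\langle\nu(S)\rangle$ is a hyperplane, so $D:=\nu^{-1}(X\cap\Pi)\in|\Oo_Y(1,1)|$ contains $S$, and is the unique such divisor because any $(1,1)$-curve through $S$ corresponds to a hyperplane containing $\nu(S)$, which must equal $\Pi$. Its image $\nu(D)=X\cap\Pi$ is a plane conic. In case (a) this conic is a pair of distinct lines $\nu(D_1),\nu(D_2)$ with $D_1\in|\Oo_Y(1,0)|$, $D_2\in|\Oo_Y(0,1)|$; since no three points of $S$ are collinear each line carries at most two of them, so the four points split as $2+2$ and none sits at the node, yielding $\#(S\cap D_1)=\#(S\cap D_2)=2$ and $D_1\cap D_2\cap S=\emptyset$. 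In case (b) the conic is smooth. In either case, for an admissible $A$ the lines $\langle\nu(A)\rangle$ and $\langle\nu(S\setminus A)\rangle$ are distinct lines of $\Pi$ (distinctness again from the no-three-collinear property) and meet in a single point $q$. Finally $r_X(q)=2$: clearly $r_X(q)\le2$, while $\langle\nu(A)\rangle$ is not a ruling of $X$ (its two points lie on different rulings in (a), and it is a genuine secant of the conic in (b)), hence is not contained in $X$, so $X\cap\langle\nu(A)\rangle=\nu(A)$; were $q\in X$ we would get $q\in\nu(A)$ and three collinear points of $S$, a contradiction. Thus $q\notin X$, $r_X(q)=2$, and $A,S\setminus A\in\Ss(Y,q)$.

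The hard part will be the classification for $k\ge2$: the flattening identity $\dim\langle\eta_i(p_j)\rangle\le 3-n_i$ is what simultaneously rules out $n_i=2$ and, in the $(\PP^1)^k$ case, supplies the two-value dichotomy that drives the combinatorial pairing argument excluding $k\ge3$. I would want to double-check the two degenerate alternatives in that step (the ruling case for $\ell_i$ and the pigeonhole on partitions), since the whole exclusion of $k\ge 3$ hinges on them. The $\PP^1\times\PP^1$ statements are then routine plane-conic geometry on the quadric $X$, the only subtlety being to record that the joining lines are never rulings so that $q\notin X$.
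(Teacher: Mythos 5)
Your proposal is correct, but the classification half runs on a genuinely different engine than the paper's. The paper fixes $A\subset S$ with $\#(A)=2$, $B=S\setminus A$, looks at the single point $q=\langle \nu (A)\rangle \cap \langle \nu (B)\rangle$, and splits according to whether $q\in \nu (Y)$ (which forces both lines into the Segre, yielding $k=2$, $n_1=n_2=1$ and case (a) at once) or $q\notin \nu (Y)$, in which case three Claims rule out $n_i\ge 2$ and $k\ge 3$ by repeatedly applying the cohomological splitting Lemma \ref{ee0} to divisors $H\in |\Oo _Y(\epsilon _i)|$. You instead flatten the (everywhere-nonzero) dependence relation of the circuit: the inequality $\dim \langle \eta _i(p_1),\dots ,\eta _i(p_4)\rangle \le 3-n_i$ — which must be read as vector-space, not projective, dimension for both the bound and the ``all $\eta_i(p_j)$ coincide when $n_i=2$'' conclusion to be right — eliminates $n_i\ge 2$ for $k\ge 2$ in one stroke, and for $Y=(\PP^1)^k$, $k\ge 3$, produces the pair-partitions $P_i$ whose combinatorics (a four-element set has only three pair-partitions; two distinct ones connect all four points, forcing a common coordinate) exclude $k\ge 3$. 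Both routes are sound; yours buys elementarity and field-independence (pure linear algebra over $K$ — note you wrote $\CC$ where the paper allows any field — with no cohomology and none of the base-change care the paper must attach to Lemma \ref{ee0}), plus a reusable flattening bound, while the paper's buys uniformity with its main toolkit (Lemma \ref{ee0} is also the workhorse of the proof of Theorem \ref{e3}) and makes the dichotomy (a)/(b), the values $r_X(q)=2$, and the membership $A, S\setminus A\in \Ss (Y,q)$ fall out of the same case division on the position of $q$. Your handling of the $k=2$ geometry — unique $D$ via the hyperplane $\langle \nu (S)\rangle$, no three points collinear, the joining lines are never rulings, hence $q\notin X$ — is complete and matches the paper in substance.
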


\begin{proof}
If $k=1$, then $S$ is formed by $4$ coplanar points, no $3$ of them collinear. The minimality assumption of $Y$ gives $n_1=2$.

From now on we assume $k\ge 2$. Take $A\subset S$ such that $\# (A) =2$ and set $B:= S\setminus A$. Since $h^1(\Ii _S(1,\dots ,1)) >0$ and $h^1(\Ii _{S'}(1,\dots ,1)) =0$ for all $S'\subsetneq S$, the set $\langle \nu (A)\rangle \cap \langle \nu (B)\rangle$ is a single point, $q$. Since $\nu (S)$ is a circuit, we have $q\notin \nu (S)$. If $q\notin \nu (Y)$, then $A$ shows that $r_X(q) =2$. 

\quad (a) Assume $q\in \nu(Y)$, say $q =\nu (o)$. Since $S$ is a circuit, we saw that $o\notin S$. Since $\# (\langle \nu (A)\rangle \cap \nu (Y))\ge 3$, $\# (\langle \nu (B)\rangle \cap \nu (Y))\ge 3$ and $\nu (Y)$ is cut out by quadrics, $\langle \nu (A)\rangle \cup \langle \nu (B)\rangle \subset \nu (Y)$, i.e., there are integers $i, j\in \{1,\dots ,k\}$
such that $\# (\pi _h(A)) = \# (\pi _m(B)) =1$ for all $h\ne i$ and all $m\ne j$. Since $\nu (S)\subset \langle \nu (A)\rangle \cup \langle \nu (B)\rangle$ and $\nu (o)\in \langle \nu (A)\rangle \cap \langle \nu (B)\rangle$, the minimality of $Y$ gives $k=2$, $n_1=n_2=1$ and that we are in case (a).

\quad (b) From now on we assume $q\notin \nu (Y)$.

\quad \emph{Claim 1:} Each $\pi _{i|S}: S\to \PP^{n_i}$ is injective.

\quad \emph{Proof of Claim 1:} Assume that some $\pi _{i|S}$ is not injective, say $\pi _{1|S}$ is not injective. Thus there
is $A'\subset S$ such that $\# (A') =2$ and $\# (\pi _1(A'))=1$. Set $B':= S\setminus A'$. We see that $A', B'$ are as
in case (a) and in particular $k=2$ and $n_1=n_2=2$ and $S=A'\cup B'\subset D_1\cup D_2$.

\quad \emph{Claim 2:} We have $n_i=1$ for all $i$.

\quad \emph{Proof of Claim 2:} Assume the existence of $i\in \{1,\dots ,k\}$ such that $n_i\ge 2$. Since $h^0(\Oo _Y(\epsilon
_i)) \ge 3$, there is $H\in |\Oo _Y(\epsilon _i)|$ containing $A$. The minimality of $Y$ and the inclusion $A\subset H$
implies $B\nsubseteq H$, i.e., $B\setminus B\cap H \ne A\setminus A\cap H$. Lemma \ref{ee0} implies $h^1(\Ii _{B\setminus B\cap
H}(\hat{\epsilon}_i)) >0$. Since $\Oo _Y(\hat{\epsilon}_i)$ is a globally generated line bundle, we get $\# (B\setminus
B\cap H)>1$. Thus $B\cap H=\emptyset$. Since any Segre embedding is an embedding, we get $\# (\pi _h(B)) =2$ for all $h\ne
i$, contradicting Claim 1 because $k\ge 2$.

\quad \emph{Claim 3:} We have $k=2$ and $n_1=n_2=1$.

\quad \emph{Proof of Claim 3:} By Claim 2 it is sufficient to prove that $k=2$. Assume
$k\ge 3$. Since $h^0(\Oo _Y(\epsilon _1)) =2$, there are $H_i\in |\Oo _Y(\epsilon _i)|$, $i=1,2$, such that $A\subset H_1\cup
H_2$. Since $k\ge 3$, as in the proof of Claim 2 we get that either $B\subset H_1\cup H_2$ or $B\cap (H_1\cup H_2) =0$ and
$\# (\pi _i(B)) =1$ for all $i\ge 3$. The latter possibility is excluded by Claim 1. Thus $S\subset H_1\cup H_2$. Hence
there
is $i\in \{1,2\}$ such that $\# (H_i\cap S) \ge 2$, i.e., $\pi _{i|S}$ is not injective, contradicting Claim 1.
\end{proof}

\begin{remark}
In case (b) of Proposition \ref{e2} $S$ is minimal, while $S$ in case (a) is not minimal.
\end{remark}

Case (a) of Proposition \ref{e2} may be generalized to the following examples of circuits in $\PP^{n_1}\times \PP^{n_2}$.

\begin{example}
Take $k=2$, i.e., $Y =\PP^{n_1}\times \PP^{n_2}$. Fix $o=(o_1,o_2)\in Y$ and set $D_i:= \pi _i^{-1}(o_i)$, $i=1,2$. We have
$D_i\in |\Oo _Y(\epsilon _i)|$. Fix $S_i\in D_i\setminus \{o\}$ such that $\# (S_i)=n_{3-i}+1$ and $\nu (S_i)$ spans the
linear space $\nu (D_i)$. Set $S:= S_1\cup S_2$. Since $o\notin S_1\cup S_2$, we have $\# (S) = n_1+n_2+2$. It is easy to
check that
$S$ is a circuit. We get in this way an irreducible and rational $(n_1^2+n_2^2+2n_1+2n_2+2)$-dimensional family of circuits.
\end{example}

\begin{remark}\label{e3.0}
Let $S\subset Y =\PP^{n_1}\times \cdots \times \PP^{n_k}$ be a nondegenerate circuit with cardinality $5$.

\quad (a) Assume $k=1$ and $n_1=3$. In this case we may take as $S$ any set with cardinality $5$ such that all its proper
subsets are linearly independent.

\quad (b) Assume $k=2$ and $n_1=n_2=1$. Since $r=3$, in this case we may take as $S$ any set with cardinality $5$ such that all
its proper subsets are linearly independent. 
\end{remark}

\begin{proposition}\label{e3.01}
Take $S\subset Y$ with $\# (S)=5$ and $e(S)\ge 2$. Let $A$ be the kernel of $S$ and let $Y' = \PP^{m_1}\times \cdots \times
\PP^{m_s}$, $s\ge 1$, be the minimal multiprojective space containing $A$. Then $A$, $e(S)$ and $Y$' are in the following list
and  all the numerical values in the list are realized by some $S$:
\begin{enumerate}
\item $e(S)=3$, $A=S$, $s=1$ and $m_s=1$;
\item $e(S)=2$, $A=S$, $s=1$ and $m_s=2$;
\item $e(S)=2$, $A=S$, $s=2$ and $m_1=m_2=1$;
\item $e(S)=2$, $\# (A)=4$, $s=1$ and $m_s=1$.
\end{enumerate}
\end{proposition}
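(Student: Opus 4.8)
The plan is to bound the discrete data $(e(S),\#(A))$ first and then read the multiprojective structure of $Y'$ off the spanning dimension of $\nu(A)$. Since $e(S)>0$ we have $\#(S)\ge e(S)+2$, so $e(S)\ge 2$ together with $\#(S)=5$ gives $e(S)\in\{2,3\}$. The kernel $A$ satisfies $e(A)=e(S)$ and $\#(A)\ge e(A)+2$, and by Lemmas \ref{p3} and \ref{p4} it is the unique minimal subset with $e(A)=e(S)$ and equals the set of essential points; in particular no proper subset of $A$ has $e$ equal to $e(A)$. With $\#(A)\le \#(S)=5$ this leaves exactly $(e(S),\#(A))\in\{(3,5),(2,5),(2,4)\}$, with $A=S$ in the first two, and
$$\dim\langle\nu(A)\rangle=\#(A)-1-e(A)$$
equal to $1,2,1$ respectively. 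I also record that when $e(A)=2$ the kernel cannot contain $4$ collinear points, since $4$ collinear points already have $e=2=e(A)$, which would make the fifth point inessential.

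I would first treat the two cases with $\dim\langle\nu(A)\rangle=1$, namely $(3,5)$ and $(2,4)$. Here $\nu(A)$ spans a line meeting $X$ in $\#(A)\ge 4$ points, so that line lies in $X$ because $X$ is cut out by quadrics (Remark \ref{e1}). A line of a Segre variety is carried isomorphically onto a line of one factor $\PP^{n_i}$ and contracted on the remaining factors; hence $\pi_j(A)$ is a single point for $j\ne i$ while $\pi_i(A)$ spans a line, and the minimal multiprojective space of $A$ is a single $\PP^1$. This gives cases (1) and (4).

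The main case is $\dim\langle\nu(A)\rangle=2$, i.e.\ $(e(S),\#(A))=(2,5)$ with $A=S$ spanning a plane $\Pi$. If $s=1$ then $A$ spans $\PP^{m_1}=\Pi$, forcing $m_1=2$ and case (2). Assume $s\ge 2$. Then $\Pi\not\subseteq X$: a $2$-plane inside a Segre variety lies in a maximal linear space of the form $\nu(\{*\}\times\cdots\times\PP^{m_i}\times\cdots\times\{*\})$, which would force $\pi_j(A)$ to be a point for all $j\ne i$ against nondegeneracy. Consequently $A$ lies on a conic $C$ cut on $\Pi$ by the quadrics through $X$. If no three points of $A$ are collinear they lie on a unique smooth conic, which must be $C$; since the nonzero restrictions of the defining quadrics of $X$ all vanish on the five points they equal this conic, so $C\subseteq X$. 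Writing $C=\nu(D)$ with $D$ integral and generating $Y'$ (as $A$ does), the minimal degree of a generating curve in a Segre variety gives $2=\deg\nu(D)\ge m_1+\cdots+m_s\ge s\ge 2$, whence $Y'=\PP^1\times\PP^1$ (case (3)).

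The hard subcase, which I expect to be the main obstacle, is when three points $a,b,c$ of $A$ are collinear. Their line meets $X$ in $\ge 3$ points, hence is a Segre line $\ell$ of some direction $i$; by the no-$4$-collinear property the other two points $d,e$ lie off $\ell$. The distinct lines $\ell$ and $\langle\nu(d),\nu(e)\rangle$ of $\Pi$ meet in a point $x\in\ell\subseteq X$ with $x\ne\nu(d),\nu(e)$, so $\langle\nu(d),\nu(e)\rangle$ meets $X$ in three points and is a Segre line of some direction $i'$, carrying $d,e$ and the point $y$ with $x=\nu(y)$; moreover $y$ lies on both Segre curves. I would then argue $i\ne i'$ (two curves of equal direction through $y$ coincide, which would put $d,e$ on $\ell$), note that for every factor $j\notin\{i,i'\}$ all five points share the $j$-th coordinate of $y$, so $\pi_j(A)$ is a point and nondegeneracy removes such $j$, leaving $s=2$ and $\{i,i'\}=\{1,2\}$. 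Finally $\pi_i(A)$ lies on the line $\pi_i(\ell)$ because $d,e,y$ are contracted by $\pi_i$ to $\pi_i(y)\in\pi_i(\ell)$, so $m_i=1$, and symmetrically $m_{i'}=1$; hence $Y'=\PP^1\times\PP^1$ (case (3)). For realizability I would exhibit one explicit example per case: five distinct points of a line for (1); five points of $\PP^2$ with no three collinear for (2); five points on a smooth curve of bidegree $(1,1)$ in $\PP^1\times\PP^1$ for (3); and four collinear points together with a fifth point off their line, whose kernel is the four collinear points, for (4).
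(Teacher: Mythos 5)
Your proposal is correct in substance and follows the same opening reduction as the paper: both arguments pin down $(e(S),\#(A))\in\{(3,5),(2,5),(2,4)\}$, dispose of the two collinear cases $(3,5)$ and $(2,4)$ by the fact that a line of $\PP^r$ meeting $\nu (Y)$ in at least $3$ points lies in $\nu (Y)$ and hence is the image of a line in a single factor, and then analyze the plane $\Pi =\langle \nu (A)\rangle$ in the remaining case $(2,5)$, splitting according to whether $\Pi \subseteq \nu (Y)$. Where you genuinely diverge is the final step, when $\Pi \nsubseteq \nu (Y)$: the paper observes that $\Pi \cap \nu (Y)$ is a conic $T$, not a double line, and then reduces to the classification of $4$-point circuits (Proposition \ref{e2}) by choosing a suitable $E\subset T$ with $\# (E)=4$; you instead argue directly, splitting into the smooth-conic subcase (where the bound $\deg \nu (D)\ge m_1+\cdots +m_s$ for a curve whose minimal multiprojective space contains $Y'$ forces $s=2$, $m_1=m_2=1$) and the line-pair subcase (three collinear points of $A$, handled by exhibiting the two Segre lines and comparing their directions). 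Your route is more self-contained, since it does not invoke Proposition \ref{e2}, and it makes explicit the degenerate configuration (three points on one Segre line, two plus the intersection point on another) that the paper compresses into ``taking suitable $E\subset T$''; you also supply explicit witnesses for the realizability claim, which the paper's proof leaves implicit.

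One local flaw: in the line-pair subcase your justification of $i\ne i'$, namely that ``two curves of equal direction through $y$ coincide'', is false in general: if $m_i\ge 2$, two distinct lines of $\PP^{m_i}$ through $\pi _i(y)$ give two distinct Segre curves of direction $i$ through $y$. The conclusion $i\ne i'$ is nevertheless correct and is rescued by exactly the argument you deploy in the next clause: if $i=i'$, then every factor $j\ne i$ satisfies $\pi _j(A)=\{\pi _j(y)\}$, so the minimal multiprojective space of $A$ has $s=1$, contradicting $s\ge 2$ (alternatively, the two lines would span a plane inside the linear space $\nu (\eta _i^{-1}(\eta _i(y)))\subset \nu (Y)$, contradicting $\Pi \nsubseteq \nu (Y)$). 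With that one-line repair your proof is complete.
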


\begin{proof}
We have $e(A) =e(S)$. Since $\nu (Y)$ is cut out by quadrics, each line $L\subset \PP^r$ containing at least $3$ points of $\nu (Y)$ is contained in
$\nu (Y)$ and hence $L =\nu (J)$ for some line $J$ in one of the factors of $Y$. 

First assume $\# (A)\le 4$. Since $e(A)\ge 2$ and $\nu $ is an embedding,
we see that $\# (A)=4$, $e(S)=2$, $s=1$ and $m_1=1$.

From now on we assume $\# (A) =5$, i.e., $A=S$.

 Since
$\nu
$ is an embedding, we see that $e(A)\le 3$ and that $e(A)=3$ if and only if $s=1$
and $m_1=1$. Now assume $e(A) =2$. In this case $\langle \nu (A)\rangle$ is a plane $\Pi$ containing at least $5$ distinct
points of $\nu (A)$. The plane $\Pi$ is contained in $\nu (Y)$ if and only if $s=1$ and $m_1=2$. Now assume that $\Pi$ is not
contained in $\nu (Y)$. Thus $s\ge 2$. Since $\nu (Y)$ is cut out by quadrics, $\nu(A)\subset \Pi$,  and $\#
(A)=5$,
$\Pi \cap \nu (Y)$ is a conic, $T$. Since $e(A)=2$ and $A$ is a finite set, $T$ is not a double line. Taking suitable $E\subset
T$ with $\# (E)=4$ and applying Proposition \ref{e2} we get $s=2$ and $m_1=m_2=1$.
\end{proof}

\begin{example}\label{p2p1}
Take $Y = \PP^2\times \PP^1$. Let $S\subset Y$ be a nondegenerate circuit with $\# (S)=5$. Here we describe the elements of
$D\in |\Oo _Y(1,1)|$ and the intersections of two of them containing $S$. We have $\deg (\nu (Y))=3$ (e.g., use that by the
distributive law the intersection number $\Oo _Y(1,1)\cdot
\Oo _Y(1,1)\cdot \Oo _Y(1,1)$ is $3$ times the integer $\Oo _Y(1,0)\cdot \Oo _Y(1,0)\cdot \Oo _Y(1,0) =1$). Since $S$ is a
circuit, $\# (S)=5$ and $r=5$, we have
$h^0(\Ii _S(1,1)) =2$.

\quad \emph{Claim 1:} The base locus $B$ of $|\Ii _S(1,1)|$ contains no effective divisor.

\quad \emph{Proof of Claim 1:} Assume that $D$ is an effective divisor contained in $B$. Since $h^0(\Ii _S(1,1))>1$, we have $D\in
|\Oo _Y(\epsilon _i)|$ for some $i=1,2$. By assumption we have $h^0(\Ii _{S\setminus S\cap D}(\hat{\epsilon}_i)) = h^0(\Ii
_S(1,1)) =2$. Since
$D$ is a multiprojective space, we have $i=2$ and $\# (S\setminus S\cap D) =1$. Lemma \ref{ee0} gives $h^1(\Ii _{S\setminus S\cap D}(1,0)) >0$, a contradiction.

By Claim 1 $S$ is contained in a unique complete intersection of two elements of $|\Oo _Y(1,1)|$.

Suppose $C$ is the complete intersection
of two elements of $|\Oo _Y(1,1)|$ (we allow the case in which $C$ is reducible or with multiple components). We know that $\deg ({C}) =\deg (\nu (Y)) =3$. Since $h^1(\Oo _Y(-2,-2))=0$ (K\"{u}nneth), a standard exact sequence gives $h^0(\Oo _C)=1$. Thus $C$ is connected. Since $C$ is the complete intersection of $2$ ample divisors, we have $\deg (\Oo _C(1,0)) =\Oo _Y(1,0)\cdot \Oo _Y(1,1)\cdot \Oo _Y(1,1)) = 2$ (where the second term is the intersection product)
and $\deg (\Oo _C(0,1)) = 1$. Since $\omega _Y\cong \Oo _Y(-3,-2)$, the adjunction formula gives $\omega _C \cong \Oo _Y(-1,0)$. Thus its irreducible components $T$
of $D$ with $\pi _1(T)$ not a point is a smooth rational curve of degree $\le 2$, while all irreducible components of $T$ of $C$ with $\pi _1(T)$ a point has arithmetic genus $1$, hence no such $T$ exists.

Now we impose that $S\subset C$. Since $h^0(\Ii _S(1,1)) =2 =h^0(\Ii _C(1,1))$, we have $\langle \nu (S)\rangle = \langle \nu ({C})\rangle \cong \PP^3$. If $C$ is irreducible, then it a smooth rational normal curve of $\PP^3$. In this case any $5$ points of $C$ forms a circuit. Of course, the general complete intersection of two elements of $|\Oo _Y(1,1)|$ is irreducible. In this
way we get an irreducible family of dimension  $13$ of circuits. Now assume that $C$ is not irreducible. Since it is connected
and $S\subset C_{\red}$, we get that
$C$ has only multiplicity one components, so it is either a connected union of $3$ lines with arithmetic genus $0$ or a union of a smooth conic and a line meeting exactly at one point and quasi-transversally. Since $\nu(S)$ is a circuit, each line contained in $\nu({C})$ contains at most $2$ points of $\nu(S)$ and each conic contained in $\nu({C})$ contains at most $3$ points of $S$. Thus if $C =T_1\cup L_1$ with $\nu(T_1)$ a smooth conic, we have $S\cap T_1\cap L_1 =\emptyset$, $\# (S\cap T_1)=3$ and $\# (S\cap L_1)=2$. Conversely any $S\subset T_1\cup L_1$ with these properties gives a circuit. The smooth conics, $T$, contained in $Y$ are of two types: either $\pi _2(T)$ is a point
(equivalently, $\pi _1(T)$ is a conic) or $\pi _2(T) =\PP^1$ (equivalently, $\pi _1(T)$ is a line; equivalently the minimal
segre containing $T$ is isomorphic to $\PP^1\times \PP^1$). Now take
$C = L_1\cup L_2\cup L_3$ with
$\nu (L_i)$ a line for all
$i$. We have
$\# (S\cap L_i) \le 2$ for all $i$. For any $1\le i< j \le 3$ such that $L_i\cap L_j\ne \emptyset$ we have $\# (S\cap
L_i\cup L_j)\le 3$. Conversely any $S$ satisfying all these inequalities gives a circuit.
\end{example}

\begin{lemma}\label{p1p1p1}
Let $\Sigma$ be the set of all nondegenerate circuits $S\subset Y:= \PP^1\times \PP^1\times \PP^1$ such that $\# (S)=5$. Let $\Bb$ be the set of all integral curves $C\subset Y$ of
tridegree $(1,1,1)$.

\quad (a) $\Sigma$ is non-empty, irreducible, $\dim \Sigma = 11$ and (if $K$ is an algebraically closed field with
characteristic $0$) $\Sigma$ is rationally connected.

\quad (b) $\Bb$ is irreducible and rational, $\dim \Bb = 6$. For any $C\in \Bb$ we have $\dim \langle \nu ({C})\rangle=3$ and
the curve
$\nu ({C})$ is a rational normal curve of $\langle \nu ({C})\rangle$.

\quad ({c}) Each $S\in \Sigma$ is contained in a unique $C\in \Bb$.

\quad (d) For any $C\in \Bb$ any $S\subset C$ with $\# (S)=5$ is an element of $\Sigma$.
\end{lemma}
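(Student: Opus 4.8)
The plan is to take the four parts in the order (b), (d), (c), (a), since the last two rest on the first two. For (b), every $C\in\Bb=\Bb(1,1,1)$ is by Remark \ref{up1} an image $h(\PP^1)$ with $h=(h_1,h_2,h_3)$, $h_i\in\mathrm{Aut}(\PP^1)$, and the image is unchanged if $h$ is precomposed with an element of $\mathrm{Aut}(\PP^1)$. Normalising the source to be the target of $\pi_1$, i.e. $h_1=\mathrm{id}$, identifies $\Bb$ with $\mathrm{Aut}(\PP^1)^2$, an open subvariety of $\PP^3\times\PP^3$; hence $\Bb$ is irreducible, rational and of dimension $6$, while $\dim\langle\nu(C)\rangle=3$ and ``$\nu(C)$ a rational normal cubic'' are the case $(n_1,n_2,n_3)=(1,1,1)$ of the general facts on $\Bb(n_1,\dots,n_k)$ proved above. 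For (d), $\nu(C)$ is then a twisted cubic, so Remark \ref{n1} gives $e(S)=\max\{0,5-4\}=1$ for every $S\subset C$ with $\#S=5$ and $e(S')=0$ for every proper $S'\subset S$; thus $S$ is a circuit, and since each $\pi_{i|C}$ is an isomorphism $\pi_i(S)$ is five distinct points spanning $\PP^1$, so $S$ is nondegenerate and $S\in\Sigma$.

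The heart is (c). From $e(S)=1$ we get $\dim\langle\nu(S)\rangle=3$. Writing the unique circuit relation as a vanishing tensor $\sum_{j=1}^{5}c_j\,x_j\otimes y_j\otimes z_j=0$ with all $c_j\neq0$ (the $x_j,y_j,z_j$ being representatives in the three factors), I would contract in the third factor against a basis of the dual: this yields two linear relations, with coefficient vectors $(c_j)$ and $(c_ju_j)$, among the five points $\eta_3(q_j)\in\PP^1\times\PP^1$, where $u_j$ is the third coordinate. As the $u_j$ are not all equal, these relations are independent, so $\nu(\eta_3(S))$ spans at most a plane and hence $\eta_3(S)$ lies on the conic cut out on the quadric $\nu(\PP^1\times\PP^1)$ by that plane, i.e. on a member $B_3$ of $|\Oo(1,1)|$ on $Y_3$; symmetrically $\eta_i(S)\subset B_i$ for each $i$. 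If each $B_i$ is a \emph{smooth} $(1,1)$-curve it is the graph of an isomorphism $g_i$ of $\PP^1$, and reading off coordinates shows the three ordered sets $\pi_1(S),\pi_2(S),\pi_3(S)$ to be projectively equivalent; Proposition \ref{n4}(b) then produces $C\in\Bb$ with $S\subset C$. Uniqueness follows at once: any $C'\in\Bb$ through $S$, parametrised by its first coordinate, is $\{(s,g(s),g'(s))\}$ with $g,g'$ the Möbius maps carrying $\pi_1(q_j)$ to $\pi_2(q_j)$ and to $\pi_3(q_j)$, and these are determined by the five matched points, so $C'$ is forced.

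The main obstacle is exactly the smoothness of the $B_i$. A reducible $B_3=L\cup L'$, with $L$ a $(1,0)$- and $L'$ a $(0,1)$-ruling, carries at most one point of $\eta_3(S)$ on each ruling as soon as $\pi_{1|S}$ and $\pi_{2|S}$ are injective; so smoothness of all $B_i$ is equivalent to injectivity of all $\pi_{i|S}$, that is, to $S$ being minimal. Thus the crux is to prove that a nondegenerate circuit with $\#S=5$ in $(\PP^1)^3$ is minimal. I would argue by contradiction: if two points lie on a line of $X$, an elementary decreasing (Section \ref{Sl}) replaces them by one point and gives a $4$-point linearly dependent set whose kernel is a circuit of size $3$ or $4$; confronting this with Remark \ref{p2} and the classification of $4$-point circuits in Proposition \ref{e2} should force three points of $S$ into one fibre of some $\eta_i$, contradicting the circuit property. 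I expect this step to be genuinely delicate, and it is where the statement is most at risk: the configuration one must rule out is two points on an $\epsilon_2$-ruling together with three points on a complementary tridegree-$(1,0,1)$ conic meeting it at a node away from $S$ --- exactly the reducible ``line $\cup$ conic'' degeneration of Example \ref{p2p1}. Such a set would be a nondegenerate circuit lying on no integral $(1,1,1)$-curve, so the minimality step cannot merely be asserted: it must either exclude this configuration or $\Sigma$ be read as its minimal locus for (c) to hold as stated.

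Finally (a) is formal given (b), (c), (d). The incidence variety $\Ii=\{(C,S):C\in\Bb,\ S\subset C,\ \#S=5\}$ fibres over $\Bb$ with fibre the set of $5$-element subsets of $C\cong\PP^1$, an open subset of $\mathrm{Sym}^5\PP^1\cong\PP^5$; hence $\Ii$ is irreducible, rational, of dimension $6+5=11$. By (d) the projection $(C,S)\mapsto S$ maps $\Ii$ into $\Sigma$, and by (c) it is a bijection onto $\Sigma$, so $\Sigma$ is irreducible of dimension $11$; rational connectedness in characteristic $0$ is inherited from the rationality of $\Ii$, which dominates $\Sigma$.
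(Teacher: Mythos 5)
Your parts (b), (d) and (a) are correct and are essentially the paper's own arguments: the same normalization $h_1=\mathrm{id}$ identifying $\Bb$ with $\mathrm{Aut}(\PP^1)^2$, the same twisted-cubic argument for (d), and the same incidence count $6+5=11$ for (a). In (c) the two routes also pivot on the same statement: that every $S\in\Sigma$ is \emph{minimal}. The paper proves this as Claim 1 of its proof (no line of $\nu(Y)$ contains two points of $\nu(S)$) and then gets $C$ as a complete intersection $D_1\cap D_2$ of irreducible divisors through $S$, while you would deduce projective equivalence of the $\pi_i(S)$ and invoke Proposition \ref{n4}(b). You declined to assert minimality; that refusal is the correct call.

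The configuration you flagged is a genuine counterexample, so part (c), the paper's Claim 1, and the completeness of case (4) of Theorem \ref{e3} are false as stated. Take $L:=\{[1:0]\}\times\PP^1\times\{[1:0]\}$, so that $\nu(L)$ is a line, and the integral curve $T:=\{([s:t],[0:1],[s:t])\}$ of tridegree $(1,0,1)$, so that $\nu(T)$ is a smooth conic; they meet exactly at $p:=([1:0],[0:1],[1:0])$. Let $S=\{a_1,a_2,b_1,b_2,b_3\}$ with $a_1,a_2$ distinct points of $L\setminus \{p\}$ and $b_1,b_2,b_3$ distinct points of $T\setminus \{p\}$ (possible once $\#(K)\ge 3$). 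Then $S$ is nondegenerate. Since $\nu(Y)$ is cut out by quadrics and contains no plane, $\langle \nu(T)\rangle \cap \nu(Y)=\nu(T)$, hence $\nu(L)\cap \langle \nu(T)\rangle =\{\nu(p)\}$ and $\langle \nu(S)\rangle =\nu(L)+\langle \nu(T)\rangle$ has dimension $3$; thus $e(S)=1$ and the relation on $\nu(S)$ is unique up to scale. Writing $\nu(p)$ once as a combination of $\nu(a_1),\nu(a_2)$ (both coefficients nonzero, as $p\notin \{a_1,a_2\}$) and once as a combination of $\nu(b_1),\nu(b_2),\nu(b_3)$ (all nonzero, since no three of the four points $\nu(p),\nu(b_1),\nu(b_2),\nu(b_3)$ of an irreducible conic are collinear) and subtracting exhibits that relation with full support, so every proper subset is independent and $S$ is a circuit. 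Yet $\pi_{1|S}$ is not injective, while $\pi_{1|C}$ is injective for every $C\in\Bb$ (Remark \ref{up1}); so $S$ lies on no element of $\Bb$, only on the connected nodal curve $L\cup T$ of tridegree $(1,1,1)$. This is the exact analogue of case (a) of Proposition \ref{e2}, and of the reducible curves which Example \ref{p2p1} explicitly admits for $\PP^2\times\PP^1$.

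For the record, the paper's proof of Claim 1 collapses on this $S$ at the point you predicted: with $A=\{a_1,a_2\}$ and $i=2$ the direction of $L$, the system $|\Ii_A(\epsilon_i)|$ from which it takes $M$ is empty (as $\pi_2(A)$ has two points); under the intended reading $M\in |\Ii_A(\epsilon_j)|$, $j\ne i$, one has $S\setminus S\cap M=\{b_1,b_2,b_3\}$ and lands in its second case, where the paper asserts these three points are constant in some direction \emph{different} from $i$; here they are constant exactly in direction $i$, and the final contradiction evaporates because $\eta_i(a_1)=\eta_i(a_2)$ forces $h^1(\Ii_A(\hat{\epsilon}_i))>0$. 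So the lemma can only be saved by strengthening the hypothesis: if $\Sigma$ is replaced by the set of minimal nondegenerate $5$-point circuits, your argument (and the paper's) completes (c), uniqueness included; otherwise the classification must be enlarged by the circuits carried by connected reducible tridegree-$(1,1,1)$ curves (line plus conic, distribution $2+3$, and chains of three lines, distribution $2+1+2$). Part (a) survives in either reading, but for the stated $\Sigma$ it needs the additional remark that these non-minimal circuits are limits of circuits on elements of $\Bb$, since $L\cup T$ is a flat degeneration of such curves.
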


\begin{proof}
By Remark \ref{up1} $\Bb$ is parametrized by the set of all triple $(h_1,h_2,h_3)\in \mathrm{Aut}(\PP^1)^3$, up to a
parametrization, i.e. we may take as $h_1$ the indentity map $\PP^1\to \PP^1$. Thus
$\Bb$ is irreducible, rational and of dimension $6$. Hence part (b) is true. 

\quad (a) Fix $C\in \Bb$. Since $C$ is irreducible and with
tridegree
$(1,1,1)$, it  is smooth and rational, $\deg (\nu ({C}))=3$ and $C$ is not contained in a proper multiprojective subspace of
$Y$. Since
$\nu({C})$ is smooth and rational and $\deg (\nu ({C}))=3$, we have $\dim \langle \nu ({C})\rangle =3$. Thus $\nu ({C})$ is a
rational normal curve of $\langle \nu ({C})\rangle$. Hence any $A\subset Y$ such that $\# (A)=5$ and
$\nu (A) \subset \nu ({C})$ is a circuit. To conclude the proof of part (d) it is sufficient to prove that $A$ is not
contained in a proper multiprojective subspace of $Y$. We prove that no $B\subset C$ with $\# (B)= 2$ is contained
in a proper multiprojective subspace of $Y$. Take $B\subset C$ such that $\# (B)=2$. Since $C$ has multidegree
$(1,1,1)$
each $\pi _{i|C}$ is injective. Thus $\# (\pi _i(B)) =2$ for all $i$, i.e., there is no $D\in |\Oo _Y(\epsilon _i)|$
containing $B$. If we prove part ({c}) of the lemma, then part (a) would follows, except for the rational connectedness of
$\Sigma$. the rational connected over an algebraically closed field with characteristic $0$ follows immediately from
\cite[Corollary 1.3]{ghsd}).

\quad (b) Fix $S\in \Sigma$. We have $\dim \langle \nu (S)\rangle =3$. Obviously $\# (L\cap \nu (S))\le 2$ for any
line
$L\subset \PP^7$ and $\# (C\cap \nu (S))\le 3$ for any plane curve
$C\subset \PP^7$. Take any $E\subset S$ such that $\# (E) =2$ and set $F:= S\setminus E$. Since $S$ is a circuit,
$\langle \nu (E)\rangle \cap \langle \nu (F)\rangle$ is a single point, $q$, and $q\notin \langle \nu (G)\rangle$ for any
$G\subsetneq A$ and any $G\subsetneq F$. Fix
$i\in
\{1,2,3\}$. Since $\dim |\Oo _Y(\hat{\epsilon}_i)|=3$, there is $D\in |\Ii _B(\hat{\epsilon}_i)|$. Lemma \ref{ee0} gives that
either $S\subset D$ or $h^1(\Ii _{S\setminus S\cap D}(\epsilon _i)) >0$. If $h^1(\Ii _{S\setminus S\cap D}(\epsilon _i)) >0$
we have $S\setminus S\cap D = E$ and $\# (\pi _h(E)) =1$ for all $h\in \{1,2,3\}\setminus \{i\}$, i.e., $\# (\eta
_i(E)) =1$.

Fix $i\in \{1,2,3\}$ and $A\subset S$ such that $\# (A)=\# (\eta _i(A)) =2$. Set $B:= S\setminus A$. We just
proved that any $D_i\in |\Oo _Y(\hat{\epsilon}_i)|$ containing $B$ contains $S$. Since $\# (L\cap \nu (S)\rangle)\le 2$
for any line
$L\subset \PP^7$, we get $\# (\eta _i(S))\ge 2$ for all $i=1,2,3$. Thus there are $D_i\in |\Oo _Y(\hat{\epsilon}_i)|$,
$i=1,2,3$ such that $S\subseteq D_1\cap D_2\cap D_3$.
Since $h^0(\Oo _Y(\epsilon _i)=2$, a standard exact sequence gives $\dim \langle \nu (D_i)\rangle =5$. Since $\langle
\nu(D_i)\cup
\langle \nu (D_j\rangle =\PP^7$ for all $i\ne j$, the Grassmann's formula gives $\dim \langle \nu (D_i)\rangle
\cap  \langle \nu (D_j)\rangle$. Thus for any $G\in \Sigma$ contained in $D_i\cap J$ we have $\langle \nu (G)\rangle =\langle \nu (D_i)\rangle
\cap  \langle \nu (D_j))\rangle$. Assume for the moment that $D_1$ is reducible, say $D_1 =D'\cup D''$ with $D'\in |\Oo
_Y(\epsilon _2)|$ and $D''\in |\Oo _Y(\epsilon _3)|$. Take any $M\in |\Oo _Y(\epsilon _2)|$ containing no irreducible
component of $D_1$.

\quad \emph{Claim 1:} There is no line $L\subset \nu (Y)$ such that $\# (L\cap \nu (S) )=2$.

\quad \emph{Proof of Claim 1:} Assume that $L$ exists and take $A\subset S$ such that $\# (A)=2$ and $\nu (A)\subset L$.
By the structure of lines of $\nu (Y)$ there is $i\in \{1,2,3\}$ such that $\# (\pi _h(A)) =1$ for all $h\in
\{1,2,3\}\setminus \{i\}$, i.e., $\# (\eta _i(A))=1$. Set $\{M\}:=  |\Ii _A(\epsilon _i)|$. Since $M$ is a multiprojective
space, we have $S\nsubseteq M$. Thus Lemma \ref{ee0} gives $h^1(\Ii _{S\setminus S\cap M}(\hat{\epsilon}_i)) >0$. Thus one of
the following cases occur:
\begin{enumerate}
\item there are $u, v \in S\setminus S\cap M$ such that $u\ne v$ and $\eta _i(u)=\eta _i(v)$;
\item we have $\# (S\setminus S\cap M) = \# (\eta _i(S\setminus S\cap M)) =3$ and $\nu _i(\eta _i(S\setminus S\cap
M))$ is contained in a line.
\end{enumerate}

First assume the existence of $u, v$. We get the existence of a line $R\subset \nu (Y)$ such that $\{u,v\}\subset R$ and
$R\cap L=\emptyset$. Thus $\dim \langle L\cup R\rangle =3$. Since $L$ and $R$ are generated by the points of $\nu (S)$
contained in them, we get $\nu (S)\subset \langle L\cup R\rangle$. Set
$\{o\}:= S\setminus (A\cup \{u,v\})$. Since any line of $\PP^7$ contains at most $2$ points of $\nu (S)$, we get $\nu (o)\in 
\langle L\cup R\rangle \setminus (L\cup R)$. Thus there is a unique line $J\subset \langle L\cup R\rangle$ such that $J\cap
L\ne \emptyset$, $R\cap J\ne \emptyset$ and $\nu (o)\in J$. Since $\nu (Y)$ is cut out by quadrics and $\{o\}\cup L\cup
R\subset \nu (Y)$, we get $J\subset \nu (Y)$. Since $\langle L\cup R\rangle \cap \nu (Y)$ is cut out by quadrics and $\nu (Y)$
contains no plane, we get that $\langle L\cup R\rangle \cap \nu (Y)$ is a quadric surface $Q$. 

\quad \emph{Subclaim:} $Q = \nu (Y))$ for some multiprojective subspace $Y'\subsetneq Y$.

\quad \emph{Proof of the subclaim:} Since the irreducible quadric surface $Q$ contains the lines $R$ and $L$ such that $R\cap
L=\emptyset$, we have
$Q\cong
\PP^1\times \PP^1$ and all elements of the two rulings of $Q$ are embedded as lines. The structure of linear spaces contained
in Segre varieties gives
$Q =\nu (Y')$ for some
$2$-dimensional multiprojective subspace $Y'\subset Y$.

 Since $S\subset \langle L\cup R\rangle \cap \nu (Y) =Q$, the subclaim gives a contradiction.

Now assume  $\# (S\setminus S\cap M) = \# (\eta _i(S\setminus S\cap M)) =3$ and that
$\nu _i(\eta _i(S\setminus S\cap M))$ is contained in a line. Thus there is $j\in \{1,2,3\}\setminus \{i\}$ such that $\#
(\pi _j(\eta _i(S\setminus S\cap M)))=1$. Since $j\ne i$, we have $\pi _j((\eta _i(S\setminus S\cap
M))) = \pi _j(S\setminus S\cap M)$. Thus there is $W\in |\Oo _Y(\epsilon _j)|$ such $S\setminus S\cap M$.
Since $W$ is a multiprojective space, we have $S\nsubseteq W$. Thus Lemma \ref{ee0} gives $h^1(\Ii _{S\setminus S\cap
W}(\hat{\epsilon}_j)) >0$. Since $\Oo _Y(\hat{\epsilon}_j)$ is globally generated and $S\setminus S\cap W \subseteq A$, we
get $S\setminus S\cap W =A$. Since $j\ne i$ we have $\# (\eta _j(A)) =2$. Thus $h^1(\Ii _A(\hat{\epsilon}_j)) =0$, a
contradiction.

\quad ({c}) In step (b) we saw that $S\subseteq D_1\cap D_2\cap D_j$ for some $D_i\in |\Oo _Y(\hat{\epsilon}_i)|$ and that
$\langle \nu (S)\rangle = \langle \nu (D_i)\rangle \cap \langle \nu (D_j)\rangle$. Now we assume that both $D_1$ and $D_2$
are reducible and that they have a common irreducible component. Write $D_1 = D'\cup D''$ with $D'\in |\Oo _Y(\epsilon _3)|$
and $D''\in |\Oo _Y(\epsilon _2)|$. We see that $D_1$ and $D_2$ have $D'$ as their common component, say $D_2 = D'\cup M$
with $M\in |\Oo _Y(\epsilon _1)|$. The curve $\nu (D''\cap M)$ is a line. Since $\dim \langle \nu (D')\rangle =3$,
we have $\langle \nu (D_1)\rangle \cap \langle \nu (D_2)\rangle = \langle \nu (D')\rangle$. Since $\mu (Y)$ is cut out by
quadrics
and contains no $\PP^3$, we have $M\cap D''\subset D'$. Thus $S\subset D'$, a contradiction.

In the same way we exclude the existence of a surface contained in $D_i\cap D_j$ for any $i\ne j$.

Now assume that $D_1 =D'\cup D''$ is reducible, $D_2 = M'\cup M''$ is reducible, but that $D_1$ and $D_2$ have no common
irreducible component. We get that $\nu (D_1\cap D_2)$ is a union of $4$ lines. Since $S\subset D_1\cap D_2$, Claim 1 gives a
contradiction. Thus at most one among $D_1$, $D_2$ and $D_3$ is reducible. Assume that $D_1$ is reducible, say  $D_1 = D'\cup D''$ with $D'\in |\Oo _Y(\epsilon _3)|$
and $D''\in |\Oo _Y(\epsilon _2)|$. The curve $\nu (D_2\cap D'')$ is a conic (maybe reducible) and hence it contains at most
$3$ points of $\nu (S)$. The curve
$\nu (D_2\cap D')$ is a line. Since $S\subset D_1\cap D_2$, Claim 1 gives a contradiction.
Thus each $D_i$, $1\le i\le 3$, is irreducible.

\quad (d) By part ({c}) $S\subseteq D_1\cap D_2\cap D_3$ with $D_i\in |\Oo _Y(\hat{\epsilon}_i)|$ and each $D_i$ irreducible.
Thus $T:= D_1\cap D_2$ has pure dimension $1$. We have $\Oo _Y(1,1,1)\cdot \Oo _Y(0,1,1)\cdot \Oo _Y(1,0,1)=3$ (intersection
number), i.e., the curve $\nu (T)$ has degree $3$. Since $S\subset T$ and no line contains two points of $\nu (S)$ (Claim 1),
$T$ must be irreducible. Since $S\subset T$, no proper multiprojective subspace of $Y$ contains $T$. Thus $T$ has multidegree
$(a_1,a_2,a_3)$ with $a_i>0$ for all $i$. Since $\deg (T) =a_1+a_2+a_3$, we get $a_i=1$ for all $i$, i.e., $T\in
\Bb$. Fix
$C, C'\in
\Bb$ such that
$C\ne C'$ and assume
$S\subseteq C\cap C'$. We have $\langle \nu({C})\rangle =\langle \nu (S)\rangle = \langle \nu (C')\rangle$. Hence $\langle \langle \nu (S)\rangle \cap \nu (Y)$
contains two different rational normal curves of $\PP^3$ with $5$ common points. Such a reducible curve $\nu ({C})\cup \nu (C')$ is contained in a unique quadric surface $Q'$
and $Q'$ is integral. Since $\nu (Y)$ is cut out by quadrics, there is a an integral surface $G\subset X$ such that $\nu (G)=Q'$. Since $\deg (G)=2$, $G\in |\Oo _Y(\epsilon _i)|$
for some $i$. Since $G$ is a multiprojective space and $S\subset G$, we got a contradiction.
\end{proof}

\begin{proof}[Proof of Theorem \ref{e3}:]
The cases $k=1$ and $k=2$, $n_1=n_2=1$ are obvious (the latter because it has $r=3$). Thus we may assume $k\ge 2$ and
$n_1+\cdots +n_k\ge 3$.

\quad (a) Assume $k=2$, $n_1=2$ and $n_2=1$. Thus $r=5$. All $S\in \Sigma$ are described in Example \ref{p2p1}. The same proof
works if $k=2$,
$n_1=1$ and
$n_2=2$.

\quad (b) Assume $k=2$ and $n_1=n_2=2$. Fix $a, a'\in S$, $a\ne a'$ and take
$H\in |\Oo _Y(1,0)|$ containing $\{a,a'\}$. Since $S\nsubseteq H$, Lemma \ref{ee0} gives $h^1(\Ii _{S\setminus
S\cap H}(0,1)) >0$. Thus either there are $b, b'\in S\setminus S\cap H$ such that $b\ne b'$ and $\pi _2(b)=\pi _2(b')$
or $\# (S\setminus S\cap H) =3$ and $\pi _2(S\setminus S\cap H)$ is contained in a line. 

First assume the existence of $b, b'$. Write $S =\{b,b',u,v,w\}$. We get the existence of $D\in |\Oo _Y(0,1)|$ containing
$\{b,b',u\}$. Since $S\nsubseteq D$, Lemma \ref{ee0} gives $h^1(\Ii _{S\setminus S\cap D}(1,0)) >0$. Thus $S\setminus S\cap D
= \{v,w\}$ and $\pi _2(v)=\pi _2(w)$. Taking $w$ instead of $u$ we get $\pi _2(v)=\pi _2(v)=\pi _2(w_2)$. Hence we get
$U\in |\Oo _Y(0,1)|$ containing at least $4$ points of $S$. Since $U$ is a multiprojective space, we have $S\nsubseteq U$.
Lemma \ref{ee0} gives $h^1(\Ii _{S\setminus S\cap U}(1,0))>0$. Since $\# (S\setminus S\cap U)=1$, we get a contradiction.

Now assume that $\pi _2(S\setminus \{a,a'\})$ is contained in a line. Thus there is $M\in
|\Oo _Y(0,1)|$ containing $S\setminus \{a,a'\}$. Since $S\nsubseteq M$, Lemma \ref{ee0} gives $h^1(\Ii _{S\setminus S\cap
M}(1,0))>0$. Since $S\setminus S\cap M\subseteq \{a,a'\}$, we get $\pi _1(a)=\pi _1(a')$. We conclude as we did with
$\{b,b'\}$ using the other factor of $\PP^2\times \PP^2$.

\quad ({c}) Assume $k=2$, $n_1=3$ and $n_2=1$. Take $H\in |\Oo
_Y(1,0)|$ containing $B$. Since $H$ is a multiprojective space, we have $S\nsubseteq H$. Thus Lemma \ref{ee0} gives
$h^1(\Ii _{A\setminus A\cap H}(0,1)) >0$. Since $\Oo _{\PP^1}(1)$ is very ample and $\# (A\setminus A\cap H)\le 2$, we get
$A\cap H =\emptyset$ and $\# (\pi _2(A)) =1$. Set  $\{M\}:=  |\Ii_A(0,1)|$. Since $S\nsubseteq M$, Lemma \ref{ee0}
gives $h^1(\Ii _{B\setminus B\cap H}(1,0)) >0$. Thus either there are $b, b'\in B\setminus B\cap H$ such that $b\ne b'$
and $\pi _1(b)=\pi _1(b')$ or $B\cap H =\emptyset$ and $\pi _1(B)$ is contained in a line. 

Assume the existence of $b, b'$. Since $h^0(\Oo _Y(1,0))=4$, we get the existence if $D\in |\Oo _Y(1,0)|$ contained $B$ and a
point of $A$. Take any $D'\in |\Oo _Y(0,1)|$ such that $D'\cap S=\emptyset$. We have $\# ((D\cup D')\cap S)=\# (D\cap
S) \ge 4$. Since $S$ is a circuit and $\# (D\cap
S) \ge 4$, we get $D\cup D'\supset S$ and hence $D\supset S$. Since $D$ is a multiprojective space, we get a contradiction.
Now assume that $\pi _1(B)$ is contained in a line. Since $h^0(\Oo _Y(1,0)) =4$, we get the existence of $D''\in |\Oo
_Y(1,0)|$ containing $S$. Since $D''$ is a multiprojective space, we get a contradiction.

\quad (d) As in step ({c}) we exclude all other cases with $k=2$ and $n_1+n_2\ge 4$. Thus from now on we assume $k\ge 3$.

\quad (e) See Lemma \ref{p1p1p1} for the description of the case $k=3$ and $n_1=n_2=n_3=1$.

\quad (f) Assume $k=3$, $n_1=2$ and $n_2=n_2=1$. Fix $a, a'\in S$ such that $a\ne a'$. Take $H\in |\Oo _Y(1,0,0)|$ containing
$\{a,a'\}$. Since $S\nsubseteq H$, Lemma \ref{ee0} gives $h^1(\Ii _{S\setminus S\cap H}(0,1,1)) >0$. Thus either there are $b,
b'\in S\setminus S\cap H$ such that $b\ne b'$ and $\eta _1(b)=\eta _1(b')$ or $\# (S\setminus S\cap H) =3$ and there is
$i\in \{2,3\}$ such that $\# (\pi _i(S\setminus S\cap H)) =1$.  In the latter case there is $M\in |\Oo _Y(\epsilon _i)|$
containing $S\setminus S\cap H$. Since $S\nsubseteq M$, Lemma \ref{ee0} gives $h^1(\Ii _{S\setminus S\cap
M}(\hat{\epsilon}_i)) >0$. Thus $\# (S\setminus S\cap M) =2$, i.e., $S\setminus S\cap M = \{a,a'\}$ and $\eta _i(a) =\eta
_i(a')$. In particular we have $\pi _1(a) =\pi _1(a')$. Thus there is $H'\in |\Oo _Y(1,0,0)|$ containing $\{a,a'\}$ and at
least another point of $H$. Using $H'$ instead of $H$ we exclude this case (but not the existence of $b, b'$ for $S\setminus
H'\cap S$), because $\# (S\setminus S\cap H') \le 2$.

Now assume the existence of $b, b'\in S\setminus S\cap H$ such that $b \ne b'$ and $\eta _1(b) =\eta _1(b')$. Thus there is $D\in |\Oo _Y(0,0,1)|$ containing $\{b,b'\}$. Using Lemma \ref{ee0} we get $h^1(\Ii _{S\setminus S\cap D}(1,1,0)) >0$. Thus one of the following cases occurs:
\begin{enumerate}
\item $S\cap D =\{b,b'\}$, $\pi _1(S\setminus S\cap D)$ is contained in a line and $\# (\pi _2(S\setminus S\cap D)) =1$;
\item there are $x, y\in S\setminus S\cap D$ such that $x\ne y$ and $\eta _3(x) =\eta _3(y)$. 
\end{enumerate}

First assume $S\cap D =\{b,b'\}$, $\pi _1(S\setminus S\cap D)$ is contained in a line and $\# (\pi _2(S\setminus S\cap D)) =1$. There is $T\in |\Oo _Y(\epsilon _2)|$
containing $S\setminus S\cap D$. Lemma \ref{ee0} gives $h^1(\Ii _{\{b,b'\}}(1,0,1)) >0$. Thus $\eta _2(b)=\eta _2(b')$. Since
$\eta _1(b)=\eta _1(b')$, we get $b=b'$, a contradiction.

Assume the existence of $x, y$ such that $x\ne y$ and $\eta _3(x)=\eta _3(y)$. Write $S = \{b,b',x,y,v\}$. There is $W\in |\Oo _Y(1,0,0)|$ containing $b$ and $x$ and hence containing $y$. By Lemma \ref{ee0} we get $h^1(\Ii _{S\setminus W}(0,1,1)) >0$.
Since $\# (S\setminus S\cap W)\le 2$, we get $S\setminus S\cap W =\{b',v\}$ and $\eta _1(b') =\eta _1(v)$.   Using $D'\in |\Oo _Y(0,1,0)|$ containing $\{b,b'\}$ instead of $D$
we get the existence of $x', y' \in \{x,y,v\}$ such that $\eta _2(x') =\eta _2(y')$ and $x'\ne y'$. We have $\{x',y'\}\cap \{x,y\} \ne \emptyset$. With no loss of generality we may assume $x=x'$.
Either $y' =y$ or $y'=v$. If $y'=y$, we get $x=y$, a contradiction. Thus $\eta _2(v) =\eta _2(y)$. Hence $\pi _1(x)=\pi _1(y) =\pi _1(v)$. Thus there is $W'\in |\Oo _Y(1,0,0)|$ containing at least $4$ points of $S$. Take a general $W_1\in |\Oo _Y(0,1,1)|$. Thus $S\cap W_1 =\emptyset$. Since $h^1(\Ii _S(1,1,1)) >0$ and $W'\cup W_1$ contains at least $4$
points of $S$, we get $S\subset W'\cup W_1$. Thus $S\subset W'$. Since $W'$ is a proper multiprojective space of $Y$, we get a
contradiction.

\quad (g) Step (e) excludes all cases with $k\ge 3$ and $n_i\ge 2$ for at least one $i$.

\quad (h) Assume $k=4$ and $n_i=1$ for all $i$. Fix $o\in S$ and let $H$ be the only element of $|\Oo _Y(\epsilon _4)|$
containing $o$.
Since $H$ is a multiprojective space, we have $S\nsubseteq H$. Thus Lemma \ref{ee0} gives $h^1(\Ii _{S\setminus S\cap
H}(\hat{\epsilon}_4))>0$. Thus one of the following two cases occurs:
\begin{enumerate}
\item $\# (\eta _4(S)) =4$ and $h^1(Y_4,\Ii _{\eta _4(S\setminus S\cap H)}(1,1,1)) >0$;
\item there are $u, v\in S\setminus S\cap H$ such that $u\ne v$ and $\eta _4(u) =\eta _4(v)$.
\end{enumerate}

\quad (h1) Assume that case (1) occurs. In this case $S\setminus S\cap H =S\setminus \{o\}$. By Proposition \ref{e2} there is
an integer
$i\in \{1,2,3\}$ such that $\# (\pi _i(S\setminus \{o\}) =1$. Thus there is $M\in |\Oo _Y(\epsilon _i)|$ containing
$S\setminus \{o\}$. Take $W\in |\Oo _Y(\hat{\epsilon}_4)|$ such that $W\cap S =\emptyset$. Since $H\cup W$ is an element
of $|\Oo _Y(1,1,1,1)|$ containing at least $4$ points of $S$ and $h^1(\Ii _S(1,1,1,1)) >0$, we have $S\subset H\cup W$. Since
$S\cap W=\emptyset$, we get $S\subset H$, a contradiction.

\quad (h2) By step (h1) there are $u, v\in S\setminus S\cap H$ such that $\eta _4(u)=\eta _4(v)$. For any $a\in S$ let 
$H_a$ be the only element of $|\Oo _Y(\epsilon _4)|$ containing $a$. Thus $H_o =H$. By step (h1) applied to $a$ instead of $o$
there are
$u_a,v_a\in S\setminus S\cap H_a$ such that $u_a\ne v_a$ and $\eta _4(u_a) = \eta _4(v_a)$. Set $E:= \cup _{a\in S} \{u_a,v_a\}$.

\quad \emph{Claim 1:} We have $\# (E) >2$ and for all $a, b\in S$ either $\{u_a,v_a\} =\{u_b,v_b\}$ or $\{u_a,v_a\}\cap \{u_b,v_b\} =\emptyset$.

\quad \emph{Proof of Claim 1:} Assume $\# (E) \le 2$, i.e., $\{u_a,v_a	\} = \{u_b,v_b\}$. Since $x\notin \{u_x,v_x\}$, taking $b = u_{u_a}$ we get a contradiction. Fix $a, b\in S$ such that $a\ne b$ and assume $\# (\{u_a,v_a\}\cap \{u_b,v_b\} )=1$, say $\{u_a,v_a\}\cap \{u_b,v_b\} =\{u_a\}$. Taking $x:= u_a$, $y:=v_a$ and $z:= v_b$, find $x, y, z\in S$
such that $\# (\{x,y,z\})=3$ and $\nu (\{x,y,z\})$ is contained in a line of of $\nu (Y)$. Thus $S$ is not a circuit, a contradiction. 

The first assertion of Claim 1 gives $\# (E) >2$. Then the second assertion of Claim 1 gives $\# (E)\ge 4$.
There is $M\in |\Oo _Y(\epsilon _1)|$ containing $E$. Since $S\nsubseteq M$, we first get $\# (E) =4$ and then (by Lemma \ref{ee0}) $h^1(\Ii _{S\setminus S\cap M}(0,1,1,1)) >0$, contradicting the global spannedness of $\Oo _Y(0,1,1,1)$.

\quad (i) Steps (g) and (h) exclude all cases with $k\ge 4$.
\end{proof}

\end{document}